\newcommand{\R}{{\mathbb R}}
\newcommand{\C}{{\mathbb C}}
\newcommand{\be}{\begin{eqnarray}}
\newcommand{\ben}{\begin{eqnarray*}}
\newcommand{\en}{\end{eqnarray}}
\newcommand{\enn}{\end{eqnarray*}}
\newcommand{\ba}{\backslash}
\newcommand{\pa}{\partial}
\newcommand{\ov}{\overline}
\newcommand{\g}{\gamma}
\newcommand{\Om}{\Omega}
\newcommand{\wi}{\widetilde}
\newtheorem{theorem}{Theorem}[section]
\newtheorem{lemma}[theorem]{Lemma}
\begin{document}
\renewcommand{\theequation}{\arabic{section}.\arabic{equation}}
\begin{titlepage}
\title{\bf Recovery of an embedded obstacle and its surrounding medium by formally-determined scattering data}
\author[1]{Hongyu Liu\thanks{hongyuliu@hkbu.edu.hk (HL)}}
\author[2]{Xiaodong Liu\thanks{Corresponding author, xdliu@amt.ac.cn (XL)}}
\affil[1]{Department of Mathematics, Hong Kong Baptist University, Kowloon, Hong Kong SAR, P. R. China.}
\affil[2]{Institute of Applied Mathematics, Academy of Mathematics and Systems Science, Chinese Academy of Sciences, 100190 Beijing, P. R. China.}
\date{}
\end{titlepage}

\maketitle

\begin{abstract}
We consider an inverse acoustic scattering problem in simultaneously recovering an embedded obstacle and its surrounding inhomogeneous medium by formally determined
far-field data. It is shown that the knowledge of the scattering amplitude with a fixed incident direction and all observation angles along with frequencies from an open interval
can be used to uniquely identify the embedded obstacle, sound-soft or sound-hard disregarding the surrounding medium. Furthermore, if the surrounding inhomogeneous medium is from
an admissible class (still general), then the medium can be recovered as well. Our argument is based on deriving certain integral identities involving the unknowns and then
inverting them by certain harmonic analysis techniques.
Finally, based on our theoretical study, a fast and robust sampling method is proposed to reconstruct the shape and location of the buried targets and the support of the surrounding inhomogeneities.

\vspace{.2in}
{\bf Keywords:} Inverse acoustic scattering, obstacle, medium, unique identifiability, simultaneous, formally-determined

\end{abstract}

\section{Introduction}\label{sec:Intro}

In this article we are concerned with the inverse scattering problem in recovering unknown/inaccessible objects by acoustic wave probe associated to the Helmholtz system.
It serves as a prototype model to many inverse problems arising from scientific and technological applications \cite{Amm3,CK,Isa,Uhl}.
The unknown/inaccessible object is usually referred to as a scatterer and it could be an impenetrable obstacle or a penetrable inhomogeneous medium.
Many existing studies tend to consider the recovery of either an obstacle or an inhomogeneous medium. We consider the simultaneous recovery of an embedded obstacle and its surrounding inhomogeneous medium, which makes the corresponding study radically challenging.

Mathematically, the inverse scattering problem is described by the following Helmholtz system.
Let $D\subset\mathbb{R}^n$, $n=2,3$, be a bounded Lipschitz domain with a connected complement $D^c:=\mathbb{R}^n\backslash\overline{D}$. Let $\Omega$ be a bounded Lipschitz domain in $\mathbb{R}^n$ such that $D\Subset\Omega$ and $\Omega^c$ is connected. Let $V(x)\in L^\infty(\mathbb{R}^n\backslash\overline{D})$ be a complex-valued function with $\Im V\geq 0$ and $\mathrm{supp}(V)\subset\Omega\backslash\overline{D}$. $D$ and $(\Omega\backslash\overline{D}, V)$, respectively, signify the impenetrable obstacle and the penetrable medium, where $V$ represents the medium parameter. Let $u(x)$ be a complex-valued function that represents the wave pressure. The time-harmonic acoustic scattering is described by the Helmholtz system as follows,
\begin{equation}\label{eq:Helm1}
\Big(\Delta+k^2(1+V)\Big) u(x)=0,\quad x\in\mathbb{R}^n\backslash\overline{D},
\end{equation}
where $u(x)=e^{\mathrm{i}kx\cdot d}+u^s(x)$, $x\in\mathbb{R}^n\backslash\overline{D}$. To complement the Helmholtz system \eqref{eq:Helm1}, we prescribe the following boundary condition on $\partial D$,
\begin{equation}\label{eq:bc}
\mathcal{B}[u](x)=0,
\end{equation}
and the following Sommerfeld radiation condition at $+\infty$,
\begin{equation}\label{eq:sommerfeld}
\lim_{|x|\rightarrow+\infty}|x|^{(n-1)/2}\Big(\frac{\partial u^s}{\partial|x|}-\mathrm{i}k u^s \Big)=0.
\end{equation}
Here, $u^i(x):=e^{\mathrm{i}kx\cdot d}$ with $k\in\mathbb{R}_+$ and $d\in\mathbb{S}^{n-1}:=\{x\in\mathbb{R}^n; |x|=1\}$ is known as the time-harmonic plane wave,
which is an entire solution to $-\Delta w-k^2 w=0$. In \eqref{eq:bc}, $\mathcal{B}[u]=u$ if $D$ is a sound-soft obstacle and $\mathcal{B}[u]=\partial u/\partial\nu$
if $D$ is a sound-hard obstacle, where $\nu\in\mathbb{S}^{n-1}$ denotes the exterior unit normal vector to $\partial D$.
We refer to \cite{KirschLiu,LSSZ} for the unique existence of an $H^1_{loc}(\mathbb{R}^n\backslash\overline{D})$ solution to the Helmholtz system \eqref{eq:Helm1}--\eqref{eq:sommerfeld}.
It is known that $u^s$ has the following asymptotic expansion \cite{CK,Ned},
\begin{equation}\label{us-asymptotic}
u^{s}(x,k,d)=\g_n\frac{e^{ik|x|}}{|x|^{\frac{n-1}{2}}}\left\{u^{\infty}\left(\frac{x}{|x|},k,d\right)+\mathcal{O}\biggl(\frac{1}{|x|}\biggr)\right\}\;
             \mbox{as }\;|x|\rightarrow\infty,
 \end{equation}
that holds uniformly in $\hat{x}:=x/|x|\in \mathbb{S}^{n-1}$, where
\begin{equation}
\g_{n}=\left\{
         \begin{array}{ll}
           \frac{1}{4\pi}, & \hbox{$n=3$;}\medskip \\
           \frac{e^{\mathrm{i}\pi/4}}{\sqrt{8k\pi}}, & \hbox{$n=2$,}
         \end{array}
       \right.
\end{equation}
is a dimensional parameter. $u^\infty(\hat x, k, d)$ is known as the scattering amplitude, where $\hat{x}$, $k$ and $d$ are referred to as the observation angle,
wavenumber and incident direction, respectively.
The inverse scattering problem that we are concerned with is to recover $D$ and $(\Omega\backslash\overline{D}, V)$ by knowledge of $u^\infty(\hat x, k, d)$.
It is noted that $u^\infty(\hat x, k, d)$ is (real) analytic in all of its arguments (cf. \cite{CK,Isa}), and hence if the scattering amplitude is known for $\hat x$
from an open subset of $\mathbb{S}^{n-1}$, then it is known on the whole sphere $\mathbb{S}^{n-1}$. The same remark holds equally for $k$ and $d$.

There is a fertile mathematical theory for the inverse scattering problem described above.
In this work, we shall be mainly concerned with the unique recovery or identifiability issue; that is, given the measurement data, what kind of unknowns that one can recover.
The unique recovery of purely a sound-soft $D$ by knowledge of $u^\infty(\hat x, k, d)$ for either i)~all $\hat x$ and $d$ along with a fixed $k$; or ii)~all $\hat x$ and $k$
along with a fixed $d$; is due to Schiffer's spectral argument \cite{CK,Isa,LR}.
The unique recovery of purely a sound-hard $D$ by knowledge of $u^\infty(\hat x, k, d)$ for all $\hat x$ and $d$ along with a fixed $k$ is due to Isakov's singular source method
\cite{Isa2,KK}. The uniqueness of recovering purely a sound-soft or a sound-hard $D$ by knowledge of $u^\infty(\hat x, k, d)$ with all $\hat x$ and finitely many $k$ and $d$ were
considered in \cite{AR,CY,HNS,LZ}.
The uniqueness of recovery of purely an inhomogeneous medium $(\Omega, V)$ by $u^\infty(\hat x, k, d)$ for all $\hat x$ and $d$ along with a fixed $k$ is mainly due to the
CGO (complex geometrical solutions) approach pioneered by Sylvester and Uhlmann \cite{SU,Na}.
The recovery of a complex scatterer as described above consisting of both an obstacle $D$ and a medium $(\Omega\backslash\overline{D}, V)$ was also considered in the literature,
and the study in this case is also related to the so-called partial data inverse problem \cite{IUY}.
If $u^\infty(\hat x, k, d)$ for all $\hat x$ and $d$ but with a fixed $k$ is used, then the recovery results were obtained by assuming that either $D$ is known in advance
or $(\Omega\backslash\overline{D}, V)$ is known in advance \cite{KP,LZZ,LiuZhangHu,Od,IUY}.
We also refer to \cite{KirschLiu,Liu,LiuZhangsiam} for the reconstruction of the support of $D$ and $\Om$ under certain conditions on $V$.
If $u^\infty(\hat{x}, k, d)$ is known for all $\hat{x}, d$ and $k$, H\"{a}hner \cite{Hahner} show that the simply connected, sound-soft obstacle $D$
together with the surrounding inhomogeneous medium $(\Omega\backslash \overline{D}, V)$ in $\R^2$ can be uniquely determined.
Actually, H\"{a}hner use the limit $k\rightarrow 0$ and obtain uniqueness of the obtacle $D$ from Schiffer's uniqueness result. In the arguments, only a single
incident direction $d$ is used and the result can be extended to obstacles with several connected components. However, uniquness of the surrounding inhomogeneity need
all the incident directions.
To our best knowledge, there is no unique recovery result available in the literature in recovering both $D$ and $(\Omega\backslash\overline{D}, V)$ by knowledge of
$u^\infty(\hat{x}, k, d)$ for both $\hat x$ and $k$, but a fixed $d$.
It is noted that the inverse scattering problem is formally determined with the data just mentioned, and we shall consider it in the present article.
Finally, we would like to mention that there are some other studies by making use of dynamical measurement data in recovering an inhomogeneous medium \cite{KT}.

For the proposed inverse scattering problem, our mathematical argument can be briefly sketched as follows.
First, we derive the integral representation of the solution to the scattering problem involving both the obstacle $D$ and the medium $(\Omega\backslash\overline{D}, V)$.
Then, by considering the low wavenumber asymptotics in terms of $k$, we can derive certain integral identities,
which can serve to decouple the scattering information of $D$ from that of $(\Omega\backslash\overline{D}, V)$.
Finally, by using certain harmonic analysis techniques, we can invert the previously obtained integral identities to recover the obstacle and the medium.
Inspired by the theoretical study in the current article as well as a recent work \cite{Liu2016} by one of the authors, where a fast and robust direct sampling method is proposed using the far-field patterns $u^\infty(\hat x, k, d)$
with many $\hat{x},d\in S^{n-1}$ and a single $k$, we develop a similar method by using the far-field patterns $u^\infty(\hat x, k, d)$
with many $\hat{x}\in S^{n-1}$, many $k$ in an interval and one or few $d\in S^{n-1}$, to reconstruct the shape and location of the buried target and the support of the surrounding inhomogeneity.

The rest of the paper is organised as follows. In Section 2, we present some preliminary knowledge on the boundary layer potentials and volume potentials.
Section 3 is devoted to the derivation of the integral representation of the forward scattering problem. In Section 4, we present the simultaneous recovery results.
Finally, in Section 5, a sampling method based on the idea from the uniqueness analyses is proposed to reconstruct the support of the buried object and the surrounding inhomogeneity.

\section{Preliminaries on integral operators}

Let $B_R$ be a central ball of radius $R$ such that $\Omega\Subset B_R$. Set $D^R:=D^c\cap B_R$. Let $\Phi(x, y)$, $x, y\in\mathbb{R}^n$ and $x\neq y$, be the fundamental solution
to the Helmholtz equation, given by
\be\label{Phi}
\Phi(x,y):=\left\{
              \begin{array}{ll}
                \frac{\mathrm{i}k}{4\pi}h^{(1)}_0(k|x-y|)=\frac{e^{\mathrm{i}k|x-y|}}{4\pi|x-y|}, & n=3;\medskip \\
                \frac{\mathrm{i}}{4}H^{(1)}_0(k|x-y|), & n=2,
              \end{array}
            \right.
\en
where $h^{(1)}_0$ and $H^{(1)}_0$ are, respectively, the spherical Hankel function and Hankel function of the first kind and order zero.
For any $\varphi\in H^{-1/2}(\pa D)$, $\psi\in H^{1/2}(\pa D)$ and $\phi\in L^2(\Omega\backslash\overline{D})$,
the single-layer potential is defined by
\ben
(\mathcal{S}\varphi)(x):=\int_{\pa D}\varphi(y)\Phi(x,y)ds(y),\quad x\in\R^n\ba{\pa D},
\enn
the double-layer potential is defined by
\ben
(\mathcal{K}\psi)(x):=\int_{\pa D}\psi(y)\frac{\pa\Phi(x,y)}{\pa\nu(y)}ds(y),\quad x\in\R^n\ba{\pa D},
\enn
and the volume potential is defined by
\ben
(\mathcal{G}_V\phi)(x):=\int_{\Omega\backslash\overline{D}}\Phi(x,y)V(y)\phi(y)dy\quad \mbox{for\ $x\in \R^{n}$},
\enn
respectively. It is shown in \cite{Mclean} that the potentials $\mathcal{S}: H^{-1/2}(\pa D)\rightarrow H^{1}_{loc}(\R^n\ba\pa D)$,
$\mathcal{K}: H^{1/2}(\pa D)\rightarrow H^{1}_{loc}(\R^n\ba\ov{D})$, $\mathcal{K}: H^{1/2}(\pa D)\rightarrow H^{1}(D)$ and
$\mathcal{G}_V: L^2(\Omega\backslash\overline{D})\rightarrow H^2(D^R)$ are well defined.
We also define the restriction of $\mathcal{S}$ and $\mathcal{K}$ to the  boundary $\pa D$ by
 \be
\label{S} (S\varphi)(x):= \int_{\pa D}\Phi(x,y)\varphi(y)ds(y),\quad  x\in \pa D,\\
\label{K} (K\psi)(x):= \int_{\pa D}\frac{\pa\Phi(x,y)}{\pa\nu(y)}\psi(y)ds(y),\quad  x\in \pa D
 \en
and the restriction of the normal derivative of $\mathcal{S}$ and $\mathcal{K}$ to the  boundary $\pa D$ by
 \be
\label{Kp} (K^{'}\varphi)(x):=\frac{\pa}{\pa\nu(x)}\int_{\pa D}\Phi(x,y)\varphi(y)ds(y),\quad  x\in \pa D,\\
\label{T} (T\psi)(x):=\frac{\pa}{\pa\nu(x)}\int_{\pa D}\frac{\pa\Phi(x,y)}{\pa\nu(y)}\psi(y)ds(y),\quad  x\in \pa D.
 \en
These boundary operators $S: H^{-1/2}(\pa D)\rightarrow H^{1/2}(\pa D)$, $K: H^{1/2}(\pa D)\rightarrow H^{1/2}(\pa D)$,
$K^{'}: H^{-1/2}(\pa D)\rightarrow H^{-1/2}(\pa D)$ and $T: H^{1/2}(\pa D)\rightarrow H^{-1/2}(\pa D)$ are well defined \cite{Mclean,Ned}.
The restriction of the volume potential $\mathcal{G}_V\phi$ on the boundary $\pa D$ is signified by $G_{V}\phi$, the corresponding normal derivative is denoted by $\pa_{\nu}G_{V}\phi$.

\section{Integral representation for forward scattering problem}\label{sec3}

For the subsequent use of our studying the inverse problem, we derive in this section a certain new integral representation of the solution to the forward scattering problem
(\ref{eq:Helm1})-(\ref{eq:sommerfeld}).

\begin{theorem}\label{uni.direct}
The forward scattering problem \eqref{eq:Helm1}-\eqref{eq:sommerfeld} has at most one solution.
\end{theorem}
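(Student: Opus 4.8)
The plan is to establish uniqueness by the standard energy argument, combined with Rellich's lemma and unique continuation. Suppose $u_1$ and $u_2$ are two solutions of \eqref{eq:Helm1}--\eqref{eq:sommerfeld} associated with the same incident field $u^i=e^{\mathrm{i}kx\cdot d}$, and set $w:=u_1-u_2$. Since the incident field cancels, $w$ is itself a radiating solution: it lies in $H^1_{loc}(\R^n\ba\ov{D})$, satisfies the homogeneous equation $(\Delta+k^2(1+V))w=0$ in $\R^n\ba\ov{D}$, obeys the homogeneous boundary condition $\mathcal{B}[w]=0$ on $\pa D$, and fulfils the Sommerfeld radiation condition \eqref{eq:sommerfeld}. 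The goal is to show $w\equiv 0$.

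First I would apply Green's first identity to $w$ and $\ov{w}$ over the truncated exterior domain $D^R=D^c\cap B_R$, with $R$ so large that $\Om\Subset B_R$; since $\mathrm{supp}(V)\subset\Om\ba\ov{D}$, the coefficient $1+V$ reduces to $1$ near $\pa B_R$, where $w$ is real-analytic by interior elliptic regularity, so the boundary term on $\pa B_R$ is classical. Substituting $\Delta w=-k^2(1+V)w$ gives
\begin{equation*}
\int_{D^R}\big(|\nabla w|^2-k^2(1+V)|w|^2\big)\,dx=\int_{\pa D}\ov{w}\,\frac{\pa w}{\pa\nu}\,ds+\int_{\pa B_R}\ov{w}\,\frac{\pa w}{\pa\nu}\,ds.
\end{equation*}
The boundary condition on $\pa D$ forces the first term on the right to vanish: in the sound-soft case $w=0$ on $\pa D$, while in the sound-hard case $\pa w/\pa\nu=0$ on $\pa D$, and in either case the duality pairing $\langle\ov{w},\pa_\nu w\rangle_{\pa D}$ is zero. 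Taking imaginary parts and using $\Im V\geq 0$ yields
\begin{equation*}
\Im\int_{\pa B_R}\ov{w}\,\frac{\pa w}{\pa\nu}\,ds=-k^2\int_{D^R}\Im(V)\,|w|^2\,dx\leq 0.
\end{equation*}

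Next I would invoke the radiation condition to force this quantity in the opposite direction. Expanding $\int_{\pa B_R}|\pa_\nu w-\mathrm{i}k w|^2\,ds$ and letting $R\to\infty$ shows, via \eqref{eq:sommerfeld}, that
\begin{equation*}
2k\lim_{R\to\infty}\Im\int_{\pa B_R}\ov{w}\,\frac{\pa w}{\pa\nu}\,ds=\lim_{R\to\infty}\int_{\pa B_R}\big(|\pa_\nu w|^2+k^2|w|^2\big)\,ds\geq 0.
\end{equation*}
Combined with the previous inequality, both limits must equal zero; in particular $\lim_{R\to\infty}\int_{\pa B_R}|w|^2\,ds=0$. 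Feeding in the far-field expansion \eqref{us-asymptotic} for $w$ identifies this limit, up to a positive constant, with $\|w^\infty\|^2_{L^2(\mathbb{S}^{n-1})}$, so the scattering amplitude of $w$ vanishes identically. Rellich's lemma then yields $w\equiv 0$ in the unbounded component of $\R^n\ba\ov{\Om}$, where $V=0$ and $w$ solves the free Helmholtz equation.

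Finally I would propagate this vanishing inward. Since $w$ solves the elliptic equation $(\Delta+k^2(1+V))w=0$ with $V\in L^\infty$ throughout the connected open set $\R^n\ba\ov{D}$, and $w$ already vanishes on a nonempty open subset, the unique continuation principle forces $w\equiv 0$ in all of $\R^n\ba\ov{D}$, hence $u_1=u_2$. I expect the unique continuation step to be the main technical obstacle: one needs a unique continuation theorem valid for a merely bounded, complex-valued potential $V$, which is available in dimensions $n=2,3$ but must be cited carefully; one must also be mindful of the limited regularity afforded by the Lipschitz assumption on $\pa D$ when justifying Green's identity and the vanishing of the boundary term over $\pa D$ as a duality pairing in $H^{1/2}(\pa D)\times H^{-1/2}(\pa D)$.
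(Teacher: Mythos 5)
Your proposal is correct and follows essentially the same route as the paper: Green's identity over $B_R\setminus\overline{D}$, vanishing of the $\partial D$ term from the homogeneous boundary condition, the sign of the imaginary part coming from $\Im V\geq 0$, then Rellich's lemma and unique continuation. The only difference is cosmetic — you unpack the standard Rellich-type lemma (via expanding $\int_{\partial B_R}|\partial_\nu w-\mathrm{i}kw|^2\,ds$) where the paper simply cites it.
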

\begin{proof}
Clearly, it is sufficient to show that $u=0$ in $\R^{n}\ba\ov{D}$ if $u^{i}=0$ in $\R^n$.
Using Green's theorem in $B_R\ba\ov{D}$, with the aid of (\ref{eq:Helm1}) and \eqref{eq:bc}, we obtain that
\ben
 \int_{\pa B_{R}}u\frac{\pa\ov{u}}{\pa\nu}ds=\int_{B_{R}\ba\ov{D}} \Big[|\nabla u|^{2}-k^2(1+\ov{V})|u|^2 \Big]\ dx.
\enn
From this, since $k>0$ and $\Im V\geq 0$, it follows that
 \ben
  \Im\int_{\pa B_{R}}u\frac{\pa \ov{u}}{\pa\nu}ds =k^2\int_{B_{R}\ba\ov{D}}\Im V |u|^2dx\geq0.
 \enn
By Rellich's Lemma (cf. \cite{CK}), we deduce that $u=0$ in $\R^{n}\ba B_{R}$ and it follows by the unique continuation principle that $u=0$ in $\R^{n}\ba\ov{D}$.

The proof is complete.
\end{proof}

Now, we turn to the existence of the solution to the forward scattering problem \eqref{eq:Helm1}-\eqref{eq:sommerfeld} via the integral equation method.

\begin{theorem}\label{Integralrepresentations}
Let $u^{i}=e^{{\rm i}kx\cdot d}$ be an incident plane wave with the wavenumber $k>0$ and incident direction $d\in \mathbb{S}^{n-1}$, and consider the scattering problem
\eqref{eq:Helm1}-\eqref{eq:sommerfeld}. Let $u\in H^1_{loc}(\R^n\ba\ov{D})$ be a solution to the scattering problem \eqref{eq:Helm1}-\eqref{eq:sommerfeld}.

{ (i)}~ Assume that $D$ is sound-soft, then the total wave field $u|_{D^R}\in L^2(D^R)$ has the following form
\be\label{urepresentationD}
u=u^{i}+k^2\mathcal{G}_{V}u+ (\mathcal{K}-\mathrm{i}\mathcal{S})\psi \quad{\rm in}\ D^R,
\en
where
$\psi\in H^{1/2}(\pa D)$ is determined by the following boundary integral equation
\be\label{dbcgamma1D}
0 = u^{i}+k^2 G_{V}u+ \frac{1}{2}\psi+(K-\mathrm{i}S)\psi \quad{\rm on}\ \pa D.
\en
Furthermore, for
$(u|_{D^R},\psi)\in L^2(D^R)\times H^{1/2}(\pa D)$,
the system of integral equations \eqref{urepresentationD}--\eqref{dbcgamma1D} is uniquely solvable.

{ (ii)}~Assume that $D$ is sound-hard, then the total wave field $u|_{D^R}\in L^2(D^R)$ has the following form
\be\label{urepresentationN}
u=u^{i}+k^2\mathcal{G}_{V}u+(\mathcal{S}+\mathrm{i}k^3\mathcal{K}\circ\mathscr{S}^2)\varphi \quad{\rm in}\,D_R,
\en
where $\mathscr{S}$ is the single-layer operator defined in \eqref{S} with the wavenumber formally replaced by $k=\mathrm{i}$ and
$\varphi\in H^{-1/2}(\pa D)$ is determined by the following boundary integral equation
\be\label{ibcgamma2N}
0 = \frac{\pa (u^{i}+k^2G_{V}u)}{\pa\nu}-\frac{1}{2}\varphi+(K^{\prime}+\mathrm{i}k^3T\circ\mathscr{S}^2)\varphi \quad{\rm on}\ \pa D.
\en
Furthermore, for
$(u|_{D^R},\varphi)\in L^2(D^R)\times H^{-1/2}(\pa D),$
the system of integral equations \eqref{urepresentationN}--\eqref{ibcgamma2N} is uniquely solvable.
\end{theorem}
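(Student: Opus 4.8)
The plan is to handle both parts by the same device: introduce the auxiliary field $v:=u-k^2\mathcal{G}_V u$, which converts the coupled obstacle-medium problem into a purely exterior obstacle problem, and then close the argument with a Fredholm/injectivity scheme anchored on the uniqueness result of Theorem \ref{uni.direct}. First I would note that, by the defining property of $\Phi$, one has $(\Delta+k^2)(\mathcal{G}_V u)=-Vu$ in $\R^n$, while $(\Delta+k^2)u=-k^2Vu$ in $\R^n\ba\ov{D}$ by \eqref{eq:Helm1}; subtracting, $v=u-k^2\mathcal{G}_V u$ solves the homogeneous Helmholtz equation $(\Delta+k^2)v=0$ in $\R^n\ba\ov{D}$. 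Moreover $v-u^i=u^s-k^2\mathcal{G}_V u$ is a sum of two radiating fields and so satisfies \eqref{eq:sommerfeld}. Thus $v$ is an exterior radiating Helmholtz solution whose data on $\pa D$ differs from the obstacle data $u|_{\pa D}$ (resp. $\pa_\nu u|_{\pa D}$) only through the smoothing terms $k^2 G_V u$ (resp. $k^2\pa_\nu G_V u$). This is the structural observation that decouples the two unknowns.

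Next I would impose the classical combined-potential ansatz on $v-u^i$. In the sound-soft case I set $v-u^i=(\mathcal{K}-\mathrm{i}\mathcal{S})\psi$, which is automatically a radiating Helmholtz solution in $\R^n\ba\ov{D}$; writing $v=u-k^2\mathcal{G}_V u$ and restricting to $D^R$ gives \eqref{urepresentationD}, and the exterior trace on $\pa D$, using $u=0$ there together with the jump relation $(\mathcal{K}\psi)_+=\tfrac12\psi+K\psi$ and the continuity of $\mathcal{S}\psi$, produces \eqref{dbcgamma1D}. The sound-hard case is identical in spirit but uses the regularized combined field potential $\mathcal{S}+\mathrm{i}k^3\mathcal{K}\circ\mathscr{S}^2$; the factor $\mathscr{S}^2$ (single layer at $k=\mathrm{i}$, squared) is inserted precisely so that $\mathcal{K}\circ\mathscr{S}^2$ raises regularity enough for the normal-derivative trace to be well defined and for the hypersingular operator $T$ to enter only through the benign combination $T\circ\mathscr{S}^2$; the Neumann trace then yields \eqref{ibcgamma2N}, with the jump $-\tfrac12\varphi$ coming from $K'$.

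I would then recast each system as $\mathcal{A}(u,\psi)=(u^i,-u^i)$ (resp. the analogue for $(u,\varphi)$) on $L^2(D^R)\times H^{\pm1/2}(\pa D)$ and show that $\mathcal{A}$ is Fredholm of index zero. Its diagonal part is $\mathrm{diag}\big(I,\tfrac12 I+K-\mathrm{i}S\big)$ (resp. $\mathrm{diag}\big(I,-\tfrac12 I+K'+\mathrm{i}k^3T\circ\mathscr{S}^2\big)$), which is invertible: $I$ on $L^2(D^R)$ trivially, and the boundary block by the classical theory of combined field integral equations, since the coupling constant $\mathrm{i}$ (resp. the $\mathscr{S}^2$ regularization) removes the interior Dirichlet (resp. Neumann) resonances. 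The off-diagonal entries are all compact: $\mathcal{G}_V$ restricted to $D^R$ is compact $L^2\to L^2$ because it maps into $H^2(D^R)$ (Rellich); the layer potentials map $H^{\pm1/2}(\pa D)$ into $H^1_{loc}$ and hence compactly into $L^2(D^R)$; and the boundary traces $G_V$, $\pa_\nu G_V$ gain smoothness on $\pa D$ and are therefore compact into $H^{\pm1/2}(\pa D)$. Consequently $\mathcal{A}$ is invertible iff it is injective.

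Finally, for injectivity, set $u^i=0$ and suppose $(u,\psi)$ solves the system. Reading \eqref{urepresentationD} as an identity in $D^R$ shows that $u$ solves the medium equation $(\Delta+k^2)u=-k^2Vu$ and, extended by the same formula, is radiating, while its exterior trace equals $k^2 G_V u+\big(\tfrac12 I+K-\mathrm{i}S\big)\psi=0$ by \eqref{dbcgamma1D}; hence $u$ is a radiating solution of the full obstacle-medium problem with vanishing incident field and vanishing Dirichlet data, so Theorem \ref{uni.direct} forces $u\equiv0$ in $\R^n\ba\ov{D}$. With $u=0$ the representation degenerates to $(\mathcal{K}-\mathrm{i}\mathcal{S})\psi=0$, whose exterior trace $\big(\tfrac12 I+K-\mathrm{i}S\big)\psi=0$ gives $\psi=0$ by the invertibility already invoked; the sound-hard case concludes identically from $\big(-\tfrac12 I+K'+\mathrm{i}k^3T\circ\mathscr{S}^2\big)\varphi=0$. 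The existence-of-representation direction for a given scattering solution follows from the same invertible boundary operator, which produces the density $\psi$ (resp. $\varphi$) realizing $v-u^i$. I expect the main obstacle to be the sound-hard boundary block: verifying that the $\mathscr{S}^2$-regularized operator $-\tfrac12 I+K'+\mathrm{i}k^3T\circ\mathscr{S}^2$ is well defined and injective on $H^{-1/2}(\pa D)$ over a merely Lipschitz boundary—taming the hypersingular $T$ and excluding interior Neumann eigenvalues—is the delicate point on which the whole scheme rests.
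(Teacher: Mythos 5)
Your proposal follows essentially the same route as the paper: the same combined volume--plus--layer ansatz (your auxiliary field $v=u-k^2\mathcal{G}_Vu$ just makes explicit what the paper's representation formula encodes), the same recasting as an invertible diagonal operator plus compact off-diagonal entries to get a Fredholm system, and the same appeal to Theorem~\ref{uni.direct} to reduce solvability to injectivity. The one divergence is the final step $\psi=0$: you cite the classical invertibility of $\tfrac{1}{2}I+K-\mathrm{i}S$ (a fact usually stated for $C^2$ boundaries and not off-the-shelf for Lipschitz $\pa D$), whereas the paper closes this self-containedly by observing that $(\mathcal{K}-\mathrm{i}\mathcal{S})\psi$ vanishes in the exterior, applying the jump relations and Green's first identity in the interior of $D$, and taking imaginary parts to force $\psi=0$.
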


\begin{proof}
We shall only prove the case (i), and the other case (ii) can be shown by following a similar argument.

Let $u|_{D^R}\in L^2(D^R)$ be a solution to the system (\ref{urepresentationD})-(\ref{dbcgamma1D}).
Extending $u$ into $\R^n\ba\ov{D}$ by the right hand side of (\ref{urepresentationD}). By
the mapping properties of the volume and boundary layer potentials (cf. \cite{CK,Mclean}), we have $u\in H^1_{loc}(\R^n\ba\ov{D})$.
Since $\Phi$ is the fundamental solution to the Helmholtz equation, one can deduce
that $\Delta u+k^{2}(1+V)u=0$ in $\R^n\ba\ov{D}$; that is, the equation (\ref{eq:Helm1}) holds.
The boundary condition \eqref{eq:bc} satisfied by $u$ are easily verified by combing the jump relations of layer potentials (cf. \cite{CK,Mclean}) and
the boundary equation (\ref{dbcgamma1D}).
Furthermore, the scattered field $u-u^i$ satisfies the radiation condition \eqref{eq:sommerfeld} due to the fact that the fundamental solution $\Phi$ is radiating.

Next, we show that the system (\ref{urepresentationD})-(\ref{dbcgamma1D}) is uniquely solvable for $(u|_{D^R},\psi)\in L^2(D^R)\times H^{1/2}(\pa D)$.
We write the system (\ref{urepresentationD})-(\ref{dbcgamma1D}) into a matrix form
\be\label{ABF}
(\mathscr {A}+\mathscr{B})X=F
\en
with
\ben
&&\mathscr{A}:=\left(
  \begin{array}{cc}
    -1 & 0 \\
    0 & 1/2+\wi{K}-\mathrm{i}\wi{S} \\
  \end{array}
\right),\quad
X:=\left(
               \begin{array}{c}
                 u \\
                 \psi \\
               \end{array}
             \right),\quad
F:=\left(
                       \begin{array}{c}
                         -u^i \\
                         -u^i \\
                       \end{array}
                     \right),\\
&&\mathscr{B}:=\left(
                       \begin{array}{cc}
                         k^2\mathcal{G}_{V} & \mathcal{K}-\mathrm{i}\mathcal{S} \\
                         k^2G_{V} & (K-\wi{K})-\mathrm{i}(S-\wi{S}) \\
                       \end{array}
                     \right),
\enn
where $\wi{K}$ and $\wi{S}$ are the corresponding operators of $K$ and $S$, respectively, with $\Phi$ replaced by $\Phi_0$ defined in \eqref{Phi0}.
We study the system in $L^2(D^R)\times H^{1/2}(\pa D)$ with respect to the canonical norm. Clearly, the operator $\mathcal {A}$ is a bounded operator that has a bounded
inverse. Furthermore, all entries of the matrix operator $\mathscr{B}$ are compact in the corresponding spaces. This implies that $\mathscr{A}+\mathscr{B}$ is a Fredholm operator.
Thus it suffices for us to show the uniqueness of the system (\ref{ABF}) in $L^2(D^R)\times H^{1/2}(\pa D)$.
Let $F=0$, then Theorem \ref{uni.direct} implies that $u=0$ in $\R^n\ba\ov{D}$.
Define
\ben
v:=(\mathcal{K}-\mathrm{i}\mathcal{S})\psi\quad{\rm in}\, \R^n\ba\pa D.
\enn
Then, $v=u=0$ in $\R^n\ba\ov{D}$ and $v|_{D}\in H^1(D)$ solves the following PDE,
\ben
\Delta v+k^{2}v=0\quad{\rm in}\ \ D.
\enn
Furthermore, the jump relations yield that
 \ben
 -v_{-}=\psi, \ \ -\frac{\pa v_{-}}{\pa\nu}=\mathrm{i}\psi \qquad \mbox{on}\,\pa D,
 \enn
 where $\pm$ signify the approaching of $\pa D$ from inside and outside of $D$. Interchanging the order of integration and using Green's first theorem over $D$, we obtain
 \ben
\mathrm{i}\int_{\pa D}|\psi|^{2}\ ds=\int_{\pa D}\ov{v_{-}}\frac{\pa v_{-}}{\pa\nu}\ ds=\int_{D}{|\nabla v|^{2}-k^{2}|v|^{2}}\ dx.
 \enn
Taking the imaginary parts of both sides of the above equation readily yields that $\psi=0$ on $\pa D$.

The proof is complete.
\end{proof}

In Theorem \ref{Integralrepresentations}, we choose an approach in using a combined form of volume, double- and single-layer potential.
Such a combination makes the integral equations uniquely solvable for all wavenumber.
However, difficulties will arise in the
study of the low wavenumber behavior of solutions to the exterior Dirichlet problems for the Helmholtz equation in two dimensions, where the fundamental solution
$H^{(1)}_0(k|x-y|)$ in the single-layer potential has no limit as $k\rightarrow 0$.
Nevertheless, by following the idea in \cite{Kress2D} due to Kress, and using a similar argument as in the proof of Thoerem \ref{Integralrepresentations},
we can obtain the following solution representation \eqref{urepresentationD2}-\eqref{dbcgamma1D2} for the exterior Dirichlet problem in the two dimensional case.

\begin{theorem}\label{thm:3}
Assume that $D\subset \R^2$ is sound-soft, let $u\in H^1_{loc}(\R^2\ba\ov{D})$ be a solution to the scattering problem \eqref{eq:Helm1}-\eqref{eq:sommerfeld}.
Then the total field $u|_{D^R}\in L^2(D^R)$ can also be given in the following form
\be\label{urepresentationD2}
u=u^{i}+k^2\mathcal{G}_{V}u+ \Big[\mathcal{K}+\mathcal{S}\circ\Big(W-\frac{2\pi}{\ln k}\Big)\Big]\psi \quad{\rm in}\,D_R,
\en
where $W: H^{-1/2}(\pa D)\rightarrow H^{-1/2}(\pa D)$ is defined by \eqref{W} and
$\psi\in H^{1/2}(\pa D)$ is determined by the following boundary integral equation
\be\label{dbcgamma1D2}
0 = u^{i}+k^2 G_{V}u+ \frac{1}{2}\psi+\Big[K+S\circ\Big(W-\frac{2\pi}{\ln k}\Big)\Big]\psi \quad{\rm on}\ \pa D.
\en
Furthermore, for
$(u|_{D^R},\psi)\in L^2(D^R)\times H^{1/2}(\pa D)$,
the system of integral equations \eqref{urepresentationD2}--\eqref{dbcgamma1D2} is uniquely solved.

\end{theorem}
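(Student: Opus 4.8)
The plan is to follow the proof of Theorem~\ref{Integralrepresentations}(i) almost verbatim, with the combined-field coupling $\mathcal{K}-\mathrm{i}\mathcal{S}$ replaced throughout by the Kress-type operator $\mathcal{K}+\mathcal{S}\circ(W-\frac{2\pi}{\ln k})$ appearing in \eqref{urepresentationD2}. The role of $W$ together with the scalar factor $-\frac{2\pi}{\ln k}$ (following \cite{Kress2D}) is precisely to absorb the logarithmic blow-up of the single-layer operator $\mathcal{S}$ as $k\to0$ while still rendering the boundary integral equation uniquely solvable, so the representation degenerates gracefully in the low-wavenumber regime needed later.

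First I would show that any pair $(u|_{D^R},\psi)$ solving \eqref{urepresentationD2}-\eqref{dbcgamma1D2} furnishes a solution of the scattering problem. Extending $u$ to $\R^2\ba\ov{D}$ by the right-hand side of \eqref{urepresentationD2}, the mapping properties of the volume and layer potentials recorded in Section~2 give $u\in H^1_{loc}(\R^2\ba\ov{D})$; since $\Phi$ solves the Helmholtz equation one obtains \eqref{eq:Helm1}, and \eqref{eq:sommerfeld} follows because $\Phi$ is radiating while $W-\frac{2\pi}{\ln k}$ acts only on the density. Taking the exterior trace of \eqref{urepresentationD2}, applying the jump relations for $\mathcal{K}$ and $\mathcal{S}$, and comparing with \eqref{dbcgamma1D2} yields $u=0$ on $\pa D$, i.e.\ the sound-soft condition holds.

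For existence and unique solvability I recast \eqref{urepresentationD2}-\eqref{dbcgamma1D2} as a $2\times2$ system $(\mathscr{A}+\mathscr{B})X=F$ exactly as in Theorem~\ref{Integralrepresentations}, with $\mathscr{A}$ the boundedly invertible diagonal/reference part and $\mathscr{B}$ the compact remainder (the volume potentials $k^2\mathcal{G}_V$, $k^2G_V$ together with the smoothing kernel-differences) on $L^2(D^R)\times H^{1/2}(\pa D)$. Hence $\mathscr{A}+\mathscr{B}$ is Fredholm of index zero and only injectivity remains. Setting $F=0$ and applying Theorem~\ref{uni.direct} gives $u=0$ in $\R^2\ba\ov{D}$, so that $v:=[\mathcal{K}+\mathcal{S}\circ(W-\frac{2\pi}{\ln k})]\psi$ vanishes in $\R^2\ba\ov{D}$ and solves $\Delta v+k^2v=0$ in $D$. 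The jump relations, together with $v_+=0$ and $\pa_\nu v_+=0$, then yield $v_-=-\psi$ and $\pa_\nu v_-=(W-\frac{2\pi}{\ln k})\psi$, so that $v$ obeys the homogeneous impedance condition $\pa_\nu v_-+(W-\frac{2\pi}{\ln k})v_-=0$ on $\pa D$.

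The crux is this injectivity step. For the $-\mathrm{i}\mathcal{S}$ coupling the induced impedance is the purely imaginary constant $\mathrm{i}$, so Green's first identity over $D$ gives $\mathrm{i}\int_{\pa D}|\psi|^2\,ds=\int_D(|\nabla v|^2-k^2|v|^2)\,dx$ and, on taking imaginary parts, $\psi=0$ at once. Here $W-\frac{2\pi}{\ln k}$ is (essentially) real, so the imaginary-part shortcut is unavailable and one must instead exploit the specific structure of $W$ from \eqref{W}. Following \cite{Kress2D}, $W$ is designed so that, for every $k$ in the admissible range, the homogeneous interior impedance problem $\Delta v+k^2v=0$ in $D$ with $\pa_\nu v_-+(W-\frac{2\pi}{\ln k})v_-=0$ on $\pa D$ admits only the trivial solution; the modification simultaneously excludes the interior Dirichlet eigenvalues that would otherwise obstruct a pure single-layer representation and neutralises the $\ln k$ degeneracy of $\mathcal{S}$ as $k\to0$. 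Granting this property, $v\equiv0$ in $D$, whence $\psi=-v_-=0$, and injectivity---and therefore unique solvability of \eqref{urepresentationD2}-\eqref{dbcgamma1D2}---follows.
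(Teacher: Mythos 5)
Your outline reproduces the route the paper itself indicates (the paper gives no written proof of Theorem~\ref{thm:3}; it only says the statement follows ``by following the idea in \cite{Kress2D}\ldots and using a similar argument as in the proof of Theorem~\ref{Integralrepresentations}''), and the first two stages are sound: the extension of $u$ by the right-hand side of \eqref{urepresentationD2}, the verification of \eqref{eq:Helm1}, \eqref{eq:bc}, \eqref{eq:sommerfeld}, and the Riesz--Fredholm reduction all go through verbatim, and your computation of the interior Cauchy data $v_-=-\psi$, $\pa_\nu v_-=(W-\frac{2\pi}{\ln k})\psi$ from the jump relations is correct. You also correctly locate the one place where the proof of Theorem~\ref{Integralrepresentations} does \emph{not} transfer: the coupling operator $W-\frac{2\pi}{\ln k}$ is self-adjoint and real, so taking imaginary parts in Green's identity yields $0=0$ and nothing more.

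The gap is that you then dispose of precisely this decisive step by fiat: ``$W$ is designed so that \ldots the homogeneous interior impedance problem admits only the trivial solution \ldots Granting this property\ldots''. That property is the entire content of the uniqueness claim and it is not a formality — for a real impedance the interior problem $\Delta v+k^2v=0$, $\pa_\nu v_-+(W-\frac{2\pi}{\ln k})v_-=0$ is a (nonlinear, since the impedance depends on $k$) eigenvalue problem which one cannot expect to avoid nontrivial solutions for \emph{every} $k>0$; indeed the operator is not even defined at $k=1$. What actually saves the theorem, and what you should supply, is an argument valid in the regime the paper needs, $k\to+0$: for $0<k<1$ one has $-\frac{2\pi}{\ln k}>0$ and $\int_{\pa D}\ov{\psi}\,W\psi\,ds=\|\psi\|_{L^2(\pa D)}^2-|\pa D|^{-1}\bigl|\int_{\pa D}\psi\,ds\bigr|^2\geq 0$, so Green's identity gives
\[
\int_{D}|\nabla v|^2\,dx+\frac{2\pi}{|\ln k|}\int_{\pa D}|v_-|^2\,ds\;\leq\;k^2\int_{D}|v|^2\,dx ,
\]
and a Robin--Poincar\'e (trace) inequality $\int_D|v|^2\leq C(\int_D|\nabla v|^2+\int_{\pa D}|v_-|^2)$ then forces $v\equiv0$ once $k^2\max(1,|\ln k|)$ is small, whence $\psi=-v_-=0$. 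This closes injectivity for all sufficiently small $k$, which is all that the low-wavenumber expansions of Section~4 require; but it also shows that the blanket ``uniquely solvable'' in the statement should carry a restriction on $k$ (at minimum $k\neq 1$, and for general $k\in(0,\infty)$ an additional non-resonance hypothesis or a genuinely different argument from \cite{Kress2D} is needed). As written, your proposal asserts rather than proves the one step on which the theorem turns.
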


\section{Unique recovery results}

In this section, we are in a position to present the major recovery results for the proposed inverse problem in determining $D$ and $(\Omega\backslash\overline{D}, V)$ by knowledge of $u^\infty(\hat x, k, d)$ for all $\hat x$, $k$ but a fixed $d$.

\subsection{Low-wavenumber asymptotics}

For the subsequent use, we first derive the low-wavenumber asymptotic expansions of the integral representations of solutions in Theorem~\ref{Integralrepresentations}.
Recall that the fundamental solution in $\R^n$ of the Laplace's equation is given by
\be\label{Phi0}
\Phi_0(x,y):=\left\{
               \begin{array}{ll}
                \displaystyle{ \frac{1}{4\pi |x-y|}}, & \hbox{in $\R^3$;}\medskip \\
                 \displaystyle{\frac{1}{2\pi}\ln\frac{1}{|x-y|}}, & \hbox{in $\R^2$.}
               \end{array}
             \right.
\en
In what follows, for a potential operator introduced in the previous section, say $\mathcal{S}$, we use $\wi{\mathcal{S}}$ to denote the corresponding integral operator with $\Phi$ replaced by $\Phi_0$ defined in \eqref{Phi0}.
Using the series expansion for $e^{\mathrm{i}k|x-y|}$ of $\Phi$ in $\R^3$ and the expansions of the Bessel function $J_0$ and the Neumann function $Y_0$ of order $0$ (see Section 3.4 in \cite{CK})
in $\R^2$, respectively, the fundamental solution to the Helmholtz equation has the following asymptotic expansion as $k\rightarrow +0$,
\be\label{Phiasy}
\begin{split}
& \Phi(x,y)= \Phi_0(x,y)\\
&+\left\{
               \begin{array}{ll}
                 \frac{\mathrm{i}}{4\pi}k-\frac{|x-y|}{8\pi}k^2+\mathcal{O}(k^3), & \hbox{in $\R^3$;} \medskip\\
                 -\frac{1}{2\pi}\ln k +c_2+ \frac{|x-y|}{8\pi}k^2\ln k+\Psi(x,y)k^2+ \mathcal{O}(k^4\ln k), & \hbox{in $\R^2$,}
               \end{array}
             \right.
\end{split}
\en
where $c_2:={\ln2}/{2\pi}-{C}/{2\pi}+{\mathrm{i}}/{4}$ with $C=0.5722\ldots$ denoting the Euler constant and
\ben
\Psi(x,y):=\frac{|x-y|^2}{8\pi}\left(\ln\frac{|x-y|}{2}+C-1-\frac{\mathrm{i}\pi}{2}\right).
\enn
For $\varphi\in H^{-1/2}(\pa D)$, we define the operators $L: H^{-1/2}(\pa D)\rightarrow \C$, $M, N: H^{-1/2}(\pa D)\rightarrow H^{1/2}(\pa D)$
and $W: H^{-1/2}(\pa D)\rightarrow H^{-1/2}(\pa D)$, respectively, by
\be
\nonumber L\varphi&:=&\int_{\pa D}\varphi(y)\ ds(y);\\
\nonumber (M\varphi)(x)&:=&\int_{\pa D}\frac{|x-y|}{8\pi}\varphi(y)\ ds(y),\quad x\in\pa D;\\
\nonumber (N\varphi)(x)&:=&\int_{\pa D}\Psi(x,y)\varphi(y)\ ds(y),\quad x\in\pa D;\\
\label{W} (W\varphi)(x)&:=&\varphi-\frac{1}{|\pa D|}\int_{\pa D}\varphi(y)\ ds(y),\quad x\in\pa D.
\en
It is clear that $L\circ W\equiv 0$.
Denote by $\mathcal {M}\varphi$ and $\mathcal {N}\varphi$ the potentials in $\R^n\ba\pa D$ by the right hand sides of $M\varphi$ and $N\varphi$, respectively.
For $\psi \in H^{1/2}(\pa D)$, we also introduce the operators $P, P^{\prime}, Q, Q^{\prime}: H^{1/2}(\pa D)\rightarrow H^{1/2}(\pa D)$ as follows,
\ben
(P\psi)(x)&:=&\frac{1}{8\pi}\int_{\pa D}\frac{\pa |x-y|}{\pa\nu(y)}\  \psi(y)ds(y),\quad x\in \pa D,\\
(P^{\prime}\psi)(x)&:=&\frac{1}{8\pi}\int_{\pa D}\frac{\pa |x-y|}{\pa\nu(x)}\psi(y)\ ds(y),\quad x\in \pa D,\\
(Q\psi)(x)&:=&\int_{\pa D}\frac{\pa \Psi(x,y)}{\pa\nu(y)}\psi(y)\ ds(y),\quad x\in \pa D,\\
(Q^{\prime}\psi)(x)&:=&\int_{\pa D}\frac{\pa \Psi(x,y)}{\pa\nu(x)}\psi(y)\ ds(y),\quad x\in \pa D.
\enn
Denote by $\mathcal {P}\psi$ and $\mathcal {Q}\psi$ the potentials in $\R^n\ba\pa D$ by the right hand sides of $P\psi$ and $Q\psi$, respectively.
Finally, for $\phi\in L^2(\Om\ba\ov{D})$, we define
\ben
U_V\phi:=\int_{\Omega\backslash\overline{D}}V(y)\phi(y)\ dy.
\enn

\begin{lemma}Let $u^{i}=e^{\mathrm{i}kx\cdot d}$ be an incident plane wave with the wavenumber $k>0$ and incident direction $d\in \mathbb{S}^{n-1}$,
and consider the scattering problem \eqref{eq:Helm1}-\eqref{eq:sommerfeld}.

{ (i)}~Assume that $D$ is sound-soft.
In the two dimensional case, the total wave field has the following asymptotic expansion
\be\label{uasyD2D}
u
&=&\mathcal {F}(1)+\sum^{\infty}_{m=1}C_m(\mathcal {F}, \wi{S}, L, A)\left(\frac{1}{\ln k}\right)^m+\mathrm{i}k\mathcal {F}(x\cdot d)\cr
& &+\sum^{\infty}_{m=1}D_m(\mathcal {F}, \wi{S}, L, A)k\left(\frac{1}{\ln k}\right)^m\cr
& &-k^2\ln k\Big[\frac{1}{2\pi}\mathcal {F}\circ U_V\circ\mathcal {F}(1)+\mathcal {F}\circ(P+M\circ W)\circ A(1)\Big]\cr
& &-k^2\Big[\frac{1}{2}\mathcal {F}(x\cdot d)^2-\mathcal {F}\circ(1+c_2)U_V\circ\mathcal {F}(1)-\mathcal {F}\circ\wi{G_V}\circ\mathcal {F}(1)\cr
& &\qquad -\mathcal {F}\circ U_V\circ\mathcal {F}\circ (\wi{S}+c_2L)\circ A(1)\cr
& &\qquad -2\pi\mathcal {F}\circ (P+M\circ W)\circ [A\circ(\wi{S}+c_2L)-I]\circ A(1)\cr
& &\qquad -\mathcal {F}\circ(-Q+2\pi M-N\circ W)\circ A(1)\Big]\cr
& &+\mathcal{O}\left(\frac{k^2}{\ln k}\right)\quad {\rm as}\ \ k\rightarrow +0\quad {\rm in}\ \ \R^2\ba\ov{D},
\en
where $A:=(I/2+\wi{K}+L+\wi{S}\circ W)^{-1}$, $\mathcal {F}:=I-(\wi{\mathcal{K}}+L+\wi{\mathcal{S}}\circ W)\circ A$, $C_m$ and $D_m$ are functionals defined by the operators
$\mathcal {F}, \wi{S}, L, A$.
In the three dimensional case, the total wave field has the following asymptotic expansion
\be\label{uasyD3D}
u
&=&\mathcal {F}(1)+\mathrm{i}k\mathcal {F}(x\cdot d)\cr
& & +k^2\Big[-\frac{1}{2}\mathcal {F}(x\cdot d)^2+\mathcal {F}\circ \wi{G}_V\circ\mathcal {F}(1)-\mathcal {F}\circ (P-iM)\circ A(1)\cr
& &\qquad +\frac{1}{4\pi} \mathcal {F}\circ L \circ A \circ L \circ A(1)-\frac{i}{4\pi}\mathcal {F}\circ L \circ A (x\cdot d))\Big]\cr
& & +\mathcal{O}(k^3)\quad {\rm as}\ \ k\rightarrow +0\quad {\rm in}\ \ \R^3\ba\ov{D},
\en
where $A:=(I/2+\wi{K}-i\wi{S})^{-1}$ and $\mathcal {F}:=I-(\wi{\mathcal{K}}-i\wi{\mathcal{S}})\circ A$.

{ (ii)}~Assume that $D$ is sound-hard.
In the two dimensional case, the total wave field has the following asymptotic expansion
\be\label{uasyN2D}
u
&=&1+\mathrm{i}k\Big[x\cdot d+\wi{\mathcal{S}}\circ B(d\cdot\nu)\Big]+\frac{k^2\ln k}{2\pi}U_V(1)\cr
& &+k^2\Big[-\frac{(x\cdot d)^2}{2}-\wi{\mathcal{S}}\circ B(d\cdot\nu)(x\cdot d)+\wi{\mathcal{G}}_V(1)+c_2U_V(1)\cr
&&\qquad+\wi{\mathcal{S}}\circ B\circ\frac{\pa \wi{G}_V(1)}{\pa\nu}\Big]\cr
& &+\mathcal{O}(k^3\ln k)\quad {\rm as}\ \ k\rightarrow 0\quad {\rm in}\ \ \R^2\ba\ov{D},
\en
where $B:=(I/2-\wi{K}^{\prime})^{-1}$.
In the three dimensional case, the total wave field has the following asymptotic expansion
\be\label{uasyN3D}
u
&=&1+\mathrm{i}k\left[x\cdot d+\wi{\mathcal{S}}\circ B(d\cdot\nu)\right]\cr
& & + k^2\Big[-\frac{(x\cdot d)^2}{2}-\wi{\mathcal{S}}\circ B(x\cdot d)(\nu\cdot d)-\frac{1}{4\pi}L\circ B (\nu\cdot d)\cr
& & \qquad +\wi{\mathcal{G}}_{V}(1)+\wi{\mathcal{S}}\circ B\circ \frac{\pa \wi{G}_V(1)}{\pa\nu}\Big]\cr
& &+\mathcal{O}(k^3)\quad {\rm as}\ \ k\rightarrow 0\quad {\rm in}\ \ \R^3\ba\ov{D},
\en
where $B:=(I/2-\wi{K}^{\prime})^{-1}$.
\end{lemma}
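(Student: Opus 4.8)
The plan is to start from the integral representations already established — namely \eqref{urepresentationD}--\eqref{dbcgamma1D} and \eqref{urepresentationN}--\eqref{ibcgamma2N} from Theorem~\ref{Integralrepresentations}, together with the two-dimensional sound-soft representation \eqref{urepresentationD2}--\eqref{dbcgamma1D2} from Theorem~\ref{thm:3} — and to feed the low-wavenumber expansion \eqref{Phiasy} of the fundamental solution into every operator occurring in them. Concretely, for each potential ($S$, $K$, $\mathcal{S}$, $\mathcal{K}$, $G_V$, $\mathcal{G}_V$ and their normal derivatives) I would replace $\Phi$ by its expansion and read off the coefficient operators: the leading static parts $\wi S,\wi K,\wi{\mathcal S},\wi{\mathcal K},\wi{G_V},\wi{\mathcal G}_V$, the constant-kernel contributions $L$ and $U_V$ (carrying the prefactor $-\tfrac{1}{2\pi}\ln k+c_2$ in $\R^2$, or $\tfrac{\mathrm i}{4\pi}k$ in $\R^3$), and the genuinely second-order pieces carried by $M,N,P,P',Q,Q'$. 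Simultaneously I would Taylor-expand $u^i=e^{\mathrm ikx\cdot d}=1+\mathrm ik(x\cdot d)-\tfrac{k^2}{2}(x\cdot d)^2+\mathcal O(k^3)$ and its normal derivative. The boundary integral equation then becomes an asymptotic identity which I solve for the density ($\psi$ or $\varphi$) order by order, after which I substitute the resulting density expansion back into the representation formula to obtain the claimed expansion of $u$.

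The subtle case is the two-dimensional sound-soft problem, where the Kress modification in \eqref{dbcgamma1D2} is exactly what regularizes the leading operator. Writing $S\circ(W-\tfrac{2\pi}{\ln k})=S\circ W-\tfrac{2\pi}{\ln k}S$ and using $L\circ W\equiv0$, I would observe that the logarithmic term of $S$ drops out of $S\circ W$, while in $-\tfrac{2\pi}{\ln k}S$ the product $-\tfrac{2\pi}{\ln k}\cdot(-\tfrac{1}{2\pi}\ln k)L=L$ survives. Consequently the $k\to0$ limit of the boundary operator is $\tfrac12 I+\wi K+L+\wi S\circ W$, which is invertible with inverse $A$, while the residual $-\tfrac{2\pi}{\ln k}\wi S$ and $-\tfrac{2\pi c_2}{\ln k}L$ pieces are $\mathcal O(1/\ln k)$ perturbations. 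A Neumann-series argument in the small parameter $1/\ln k$ then yields the density as a series in powers of $1/\ln k$ at each order in $k$. Substituting the leading density $\psi_0=-A(1)$ into \eqref{urepresentationD2} produces $u_0=1-(\wi{\mathcal K}+L+\wi{\mathcal S}\circ W)\circ A(1)=\mathcal F(1)$, which fixes the definition of $\mathcal F$, and the $1/\ln k$ perturbations generate precisely the functionals $C_m,D_m$ in \eqref{uasyD2D}.

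With the leading operator inverted I would carry out the order-by-order matching. At order $k^0$ one gets $\mathcal F(1)$; the order-$k$ part $\mathrm ik(x\cdot d)$ of $u^i$ feeds through $A$ and $\mathcal F$ to give $\mathrm ik\,\mathcal F(x\cdot d)$. The heart of the computation is the $k^2$ (and, in $\R^2$, $k^2\ln k$) coefficient: here I must collect the quadratic term $-\tfrac{k^2}{2}\mathcal F(x\cdot d)^2$ from $u^i$, the medium contributions coming from $\wi{G_V}$ and $U_V$ (the latter entering through both the constant $c_2$ and the $\ln k$ prefactor in \eqref{Phiasy}), and the second-order boundary contributions encoded by $M,N,P,Q$. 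The three-dimensional cases run along the same lines but are cleaner: the absence of a logarithm removes the $1/\ln k$ series, the $\tfrac{\mathrm i}{4\pi}k$ term of $\Phi$ supplies the $\tfrac{1}{4\pi}\mathcal F\circ L\circ A\circ L\circ A$ and $-\tfrac{\mathrm i}{4\pi}\mathcal F\circ L\circ A$ contributions, and the $-\tfrac{|x-y|}{8\pi}k^2$ term supplies the $\mathcal F\circ(P-\mathrm iM)\circ A$ contribution, with $A=(\tfrac12 I+\wi K-\mathrm i\wi S)^{-1}$ now the inverted leading operator.

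For the sound-hard cases I would use \eqref{urepresentationN}--\eqref{ibcgamma2N}. The normal-derivative kernel annihilates the constant and logarithmic parts of $\Phi$, so the leading boundary operator is simply $\tfrac12 I-\wi K'$ with inverse $B$ and no $1/\ln k$ expansion is needed; moreover the artificial terms $\mathrm ik^3\mathcal K\circ\mathscr S^2$ and $\mathrm ik^3 T\circ\mathscr S^2$, inserted only to guarantee unique solvability at every $k$, are of order $k^3$ and therefore do not affect the expansions. Since $\pa_\nu u^i=\mathrm ik(d\cdot\nu)+\mathcal O(k^2)$, the density begins at order $k$ with $\varphi_1=\mathrm ik\,B(d\cdot\nu)$, giving the order-$k$ field $\mathrm ik[x\cdot d+\wi{\mathcal S}\circ B(d\cdot\nu)]$, while the $k^2$ terms follow from the quadratic part of $\pa_\nu u^i$ together with $\wi{\mathcal G}_V$, $U_V$ and $\pa_\nu\wi{G_V}$. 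I expect the main obstacle throughout to be the bookkeeping of the $k^2$ (respectively $k^2\ln k$) coefficient: because $S$, $K$, $G_V$ and the density are each expanded to second order, many operator products contribute at this order, and in $\R^2$ one must track carefully how the $\ln k$ prefactors interact with the $1/\ln k$ density series so that the spurious $\ln k$ and $1/\ln k$ cross terms cancel, leaving the clean coefficients displayed in \eqref{uasyD2D}--\eqref{uasyN3D}.
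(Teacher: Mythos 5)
Your proposal follows essentially the same route as the paper: it starts from the integral representations of Theorems~\ref{Integralrepresentations} and~\ref{thm:3}, substitutes the expansion \eqref{Phiasy} of $\Phi$ into each potential operator together with the Taylor expansion of $u^i$, identifies the invertible leading boundary operators $A$ and $B$ (including the correct observation that the Kress modification combined with $L\circ W\equiv 0$ turns the leading $2$D sound-soft operator into $I/2+\wi{K}+L+\wi{S}\circ W$), and recovers the coefficients by a Neumann-series/order-by-order matching; the paper merely packages the same computation as the inversion of a $2\times 2$ matrix operator. The approach and all key structural points agree with the paper's proof.
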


\begin{proof}
We present the proof of \eqref{uasyD2D} only, and the other asymptotic expansions can be proved by following a similar argument.

Rewrite \eqref{urepresentationD2}-\eqref{dbcgamma1D2} into a matrix form
\ben
\left(
  \begin{array}{cc}
    I-k^2\mathcal {G}_{V} & -\mathcal {K}-\mathcal{S}\circ\Big(W-\frac{2\pi}{\ln k}\Big) \\
    k^2 G_{V} & \frac{1}{2}I+K+S\circ\Big(W-\frac{2\pi}{\ln k}\Big) \\
  \end{array}
\right)\left(
         \begin{array}{c}
           u \\
           \psi \\
         \end{array}
       \right)=\left(
                 \begin{array}{c}
                   u^{i} \\
                   -u^{i} \\
                 \end{array}
               \right).
\enn
We can deduce that
\be\label{upsimatrix}
\left(
         \begin{array}{c}
           u \\
           \psi \\
         \end{array}
       \right)=
\left(
  \begin{array}{cc}
    I-k^2\mathcal {G}_{V} & -\mathcal {K}-\mathcal{S}\circ\Big(W-\frac{2\pi}{\ln k}\Big) \\
    k^2 G_{V} & \frac{1}{2}I+K+S\circ\Big(W-\frac{2\pi}{\ln k}\Big) \\
  \end{array}
\right)^{-1}\left(
                 \begin{array}{c}
                   u^{i} \\
                   -u^{i} \\
                 \end{array}
               \right).
\en
Recall the asymptotic behavior \eqref{Phiasy}, that is, for $x\neq y$, as $k\rightarrow +0$,
\ben
\Phi(x,y)
&=&-\frac{\ln k}{2\pi}+[\Phi_0(x,y)+c_2]\cr
& &+\frac{|x-y|}{8\pi}k^2\ln k +\Psi(x,y)k^2+\mathcal{O}(k^4\ln k).
\enn
From this, patient but still straightforward, calculations show that, as $k\rightarrow +0$,
\ben
k^2\mathcal {G}_{V}&=&-\frac{k^2\ln k}{2\pi}U_{V}+k^2(\wi{\mathcal {G}_{V}}+c_2U_V)+O(k^2/\ln k),\\
\mathcal {K}&=&\wi{\mathcal {K}}+k^2\ln k\mathcal {P}+k^2\mathcal {Q}+O(k^4\ln k),\\
\mathcal{S}\circ\Big(W-\frac{2\pi}{\ln k}\Big)
&=& L + \wi{\mathcal{S}}\circ W - \frac{2\pi}{\ln k}(\wi{\mathcal{S}}+c_2L) + k^2\ln kM\circ W \cr
& & + k^2(N\circ W-2\pi M) - \frac{2\pi k^2}{\ln k}N + \mathcal{O}(k^4\ln k).
\enn
Inserting these expansions into \eqref{upsimatrix}, using the fact that
\ben
\left(
  \begin{array}{cc}
    I & -(\wi{\mathcal {K}}+L+\wi{\mathcal {S}}\circ W) \\
    0 & I/2 +\wi{K}+L+\wi{S}\circ W \\
  \end{array}
\right)^{-1}=\left(
               \begin{array}{cc}
                 I & (\wi{\mathcal {K}}+L+\wi{\mathcal {S}}\circ W)\circ A \\
                 0 & A \\
               \end{array}
             \right),
\enn
where $A:=(I/2 +\wi{K}+L+\wi{S}\circ W)^{-1}$, we deduce that
\ben
\left(
  \begin{array}{c}
    u \\
    \psi \\
  \end{array}
\right)
&=&
\Big[I+\frac{2\pi}{\ln k}\left(
                           \begin{array}{cc}
                             0 & \mathcal {F}\circ (\wi{S}+c_2L)\\
                             0 & -A\circ (\wi{S}+c_2L)\\
                           \end{array}
                         \right)\cr
& &\, +k^2\ln k\left(
               \begin{array}{cc}
                 \frac{1}{2\pi}\mathcal {F}\circ U_V & -\mathcal {F}\circ(P+M\circ W) \\
                 -\frac{1}{2\pi}A\circ U_V & A\circ(P+M\circ W) \\
               \end{array}
             \right)\cr
& &\, +k^2\left(
          \begin{array}{cc}
            -\mathcal {F}\circ (\wi{G_V}+c_2 U_V) & -\mathcal {F}\circ (Q-2\pi M+N\circ W)\\
            A\circ(\wi{G_V}+c_2 U_V) & A\circ (Q-2\pi M+N\circ W)\\
          \end{array}
        \right)\Big]^{-1}\cr
& &\left(
               \begin{array}{cc}
                 I & (\wi{\mathcal {K}}+L+\wi{\mathcal {S}}\circ W)\circ A \\
                 0 & A \\
               \end{array}
             \right)\left(
                      \begin{array}{c}
                        u^{i} \\
                        -u^{i} \\
                      \end{array}
                    \right).
\enn
Finally, \eqref{uasyD2D} follows by a Neumann series argument, along with the use of the following expansion
\ben
u^{i}(x,k,d)=e^{\mathrm{i}kx\cdot d}=1+\mathrm{i}k(x\cdot d)-k^2\frac{(x\cdot d)^2}{2}+O(k^3)\quad \mbox{as}\,k\rightarrow +0,
\enn

The proof is complete.
\end{proof}

\subsection{Recovery of the embedded obstacle}

We first consider the unique recovery of the embedded obstacle $D$, disregarding the surrounding inhomogeneous medium $(\Omega\backslash\overline{D}, V)$.
To that end, in what follows, we introduce another scatter consisting of an obstacle $\widehat{D}$ and a medium $(\widehat{\Omega}\backslash\overline{\widehat{D}}, \widehat{V})$.
Without loss of generality, we assume that $R$ is large enough such that both $\Omega$ and $\widehat{\Omega}$ are contained in $B_R$.
Throughout the rest of the section, we use $\widehat{u}$ to denote the wave field associated with $\widehat{D}$ and $(\widehat{\Omega}\backslash\overline{\widehat{D}}, \widehat{V})$. In what follows, we shall show that if $u^\infty$ and $\widehat{u}^\infty$ are identically the same for certain measurement data set, then $D$ and $\widehat{D}$ must be identically the same as well disregarding $(\Omega\backslash\overline{D}, V)$ and $(\widehat{\Omega}\backslash\overline{\widehat{D}}, \widehat{V})$. This is always true for the sound-soft case, whereas for the sound-hard case, we need impose a certain generic geometric condition on the obstacles $D$ and $\widehat{D}$ as follows.

Suppose that $D$ and $\widehat{D}$ are both sound-hard and $D\neq\widehat{D}$. Let $\mathbb{G}$ be the unbounded connected component of the complement of $D\cup\widehat{D}$.
If $D\neq\widehat{D}$, we know that either $(\R^2\ba\mathbb{G})\ba\ov{D}$ or $(\R^2\ba\mathbb{G})\ba\ov{\widehat{D}}$ is nonempty.
$D$ and $\widehat{D}$ are said to be {\it admissible} if there exists a connected component, say $D^*$, of $(\R^2\ba\mathbb{G})\ba\ov{D}$ or $(\R^2\ba\mathbb{G})\ba\ov{\widehat{D}}$
such that the divergence theorem holds in $D^*$. Here, we note that divergence theorem always holds in Lipschitz domains (cf. \cite{Mclean}).
It is easily seen that $\partial D^*$ is composed of finitely many Lipschitz pieces.
One can show that if $D^*$ can be decomposed into the union of finitely many Lipschitz subdomains,
then the divergence theorem holds in $D^*$ and hence both $D$ and $\widehat{D}$ are admissible.
Moreover, if both $D$ and $\widehat{D}$ are polyhedral domains, then $D^*$ is also a polyhedral domain and therefore both $D$ and $\widehat{D}$ are clearly admissible.

\begin{theorem}\label{thm:main1}
Let $D$ and $\widehat{D}$ be two obstacles such that $u^{\infty}(\hat{x},k,d)=\widehat{u}^{\infty}(\hat{x},k,d)$ for $(\hat x, k)\in \Sigma\times \zeta$
and a fixed $d\in\mathbb{S}^{n-1}$, where $\Sigma\times\zeta$ is any open subset of $\mathbb{S}^{n-1}\times\mathbb{R}_+$.
Then $D=\widehat{D}$ if they are one of the following two types:
\begin{enumerate}
  \item[(i)] both $D$ and $\widehat{D}$ are sound soft;
  \item[(ii)] both $D$ and $\widehat{D}$ are sound hard and satisfy the admissibility condition as described above.
\end{enumerate}
\end{theorem}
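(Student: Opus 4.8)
The plan is to combine the analytic dependence of the far field on its arguments with the low-wavenumber asymptotics of the preceding Lemma, so as to reduce the problem to a Schiffer-type uniqueness statement for the \emph{limiting harmonic} problem, in which the surrounding medium no longer appears. First, since $u^{\infty}(\hat x,k,d)$ is real-analytic in $(\hat x,k)$, the identity on the open set $\Sigma\times\zeta$ propagates to all $(\hat x,k)\in\mathbb S^{n-1}\times\R_+$. For each fixed $k$, both $u$ and $\widehat u$ solve the free Helmholtz equation in $G_0:=\R^n\ba(\ov{\Om}\cup\ov{\widehat\Om})$, where $V=\widehat V=0$, and they share the same scattering amplitude; Rellich's lemma together with the unique continuation principle then yields $u(\cdot,k,d)=\widehat u(\cdot,k,d)$ in $G_0$ for every $k>0$.

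For case (i), I would insert the expansions \eqref{uasyD2D}--\eqref{uasyD3D} for $u$ and their analogues for $\widehat u$ into this identity and match coefficients of like powers of $k$ (and of $\ln k$ in $\R^2$), using the uniqueness of asymptotic expansions. The crucial point is that the leading coefficient $\mathcal F(1)$ is \emph{independent of} $V$: it is the harmonic function in $\R^n\ba\ov D$ that vanishes on $\partial D$ (the sound-soft limit) and does not vanish identically, since $\mathcal F(1)\to1$ as $|x|\to\infty$. Matching the $k^0$-term gives $\mathcal F(1)=\widehat{\mathcal F}(1)$ on $G_0$; since both are harmonic on the connected unbounded component $\mathbb G$ of $\R^n\ba(\ov D\cup\ov{\widehat D})$ and agree on the open subset $G_0\subset\mathbb G$, they coincide throughout $\mathbb G$ by unique continuation. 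Supposing $D\neq\widehat D$, take a connected component $D^*$ of $(\R^n\ba\mathbb G)\ba\ov D$ or of $(\R^n\ba\mathbb G)\ba\ov{\widehat D}$. Then $\mathcal F(1)$ is harmonic in $D^*\subset\R^n\ba\ov D$ and vanishes on all of $\partial D^*$ --- on the part lying on $\partial D$ by the sound-soft condition, and on the part lying on $\partial\widehat D$ because there $\mathcal F(1)=\widehat{\mathcal F}(1)=0$. As $0$ is not a Dirichlet eigenvalue of $-\Delta$ on a bounded domain, $\mathcal F(1)\equiv0$ in $D^*$, hence $\mathcal F(1)\equiv0$ in $\R^n\ba\ov D$ by unique continuation, contradicting $\mathcal F(1)\to1$ at infinity.

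Case (ii) is the main obstacle, and here the low-wavenumber expansion is indispensable in a more delicate way. By \eqref{uasyN2D}--\eqref{uasyN3D} the leading term of $u$ is the constant $1$, identical for $D$ and $\widehat D$ and hence uninformative; one must pass to the $O(k)$ coefficient $v_1:=x\cdot d+\wi{\mathcal S}\circ B(d\cdot\nu)$, with $B=(I/2-\wi K')^{-1}$, which is again $V$-independent and is precisely the harmonic function in $\R^n\ba\ov D$ satisfying $\pa v_1/\pa\nu=0$ on $\partial D$ and $v_1-x\cdot d\to0$ at infinity. Matching the $\mathrm ik$-coefficients and invoking unique continuation gives $v_1=\widehat v_1$ in $\mathbb G$. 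Assuming $D\neq\widehat D$ and choosing $D^*$ as above, $v_1$ is harmonic in $D^*$ and its normal derivative vanishes on every Lipschitz piece of $\partial D^*$ (on the $\partial D$-part by the sound-hard condition, on the $\partial\widehat D$-part because $v_1=\widehat v_1$ up to the boundary and $\pa_\nu\widehat v_1=0$).

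The difficulty, and the reason for the admissibility hypothesis, is that unlike the Dirichlet case $0$ \emph{is} a Neumann eigenvalue, so vanishing Neumann data alone does not force $v_1\equiv0$. This is exactly where admissibility enters: it guarantees that the divergence theorem (Green's first identity) holds in $D^*$, so that $\int_{D^*}|\nabla v_1|^2\,dx=\int_{\partial D^*}\ov{v_1}\,\pa_\nu v_1\,ds=0$ and therefore $\nabla v_1\equiv0$ in $D^*$. Thus $v_1$ is constant on the open set $D^*\subset\R^n\ba\ov D$, and unique continuation forces $v_1$ to be constant throughout $\R^n\ba\ov D$ --- contradicting the linear growth $v_1\sim x\cdot d$ at infinity. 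Hence $D=\widehat D$ in both cases. I expect the genuinely delicate points to be the rigorous justification of matching the asymptotic coefficients on $G_0$ (convergence/uniqueness of the expansions) and the boundary-regularity bookkeeping in the Neumann integration by parts, which is precisely what the admissibility condition is designed to control.
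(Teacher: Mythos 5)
Your overall strategy --- analytic continuation in $(\hat x,k)$, Rellich's lemma to get $u=\widehat u$ in the unbounded component $\mathbb G$, matching coefficients of the low-wavenumber expansion to extract $V$-independent harmonic functions, and then a maximum-principle (Dirichlet) or divergence-theorem (Neumann) argument on the component $D^*$ --- is exactly the paper's. Your sound-hard argument and your three-dimensional sound-soft argument coincide with the paper's proof in all essentials, including the role of the admissibility condition.

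The gap is in the two-dimensional sound-soft case. There the $k^0$ coefficient $\mathcal F(1)=1-(\wi{\mathcal K}+L+\wi{\mathcal S}\circ W)\circ A(1)$ does \emph{not} tend to $1$ at infinity: the double-layer part and $\wi{\mathcal S}\circ W$ decay (the density $W\circ A(1)$ has mean zero), but $L\circ A(1)$ is a constant, so $\mathcal F(1)\to 1-L\circ A(1)$; this is stated explicitly in the paper's proof of Theorem~\ref{uni_q}, which has to \emph{assume} $L\circ A(1)\neq 1$ as an extra hypothesis. Worse, $(\wi{\mathcal K}+L+\wi{\mathcal S}\circ W)\circ A(1)$ is the bounded harmonic function in $\R^2\ba\ov D$ with boundary value $1$, and by uniqueness of the bounded exterior Dirichlet problem in the plane that function is the constant $1$; hence $\mathcal F(1)$ vanishes identically in $\R^2\ba\ov D$ and the $k^0$ term carries no information about $D$ at all. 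Your concluding contradiction (``$\mathcal F(1)\equiv 0$ contradicts $\mathcal F(1)\to 1$'') therefore fails in $\R^2$. The paper avoids this by matching the $O(k)$ coefficient instead: the associated harmonic function $\wi u_2$ has boundary data $x\cdot d$ on $\partial D$ and is bounded at infinity, so the same $D^*$ argument (applied to $\wi u_2-x\cdot d$, which has homogeneous Dirichlet data on $\partial D^*$) forces $\wi u_2=x\cdot d$ in $\R^2\ba\ov D$, contradicting boundedness because $x\cdot d$ grows linearly. You should replace your $k^0$ matching by the $O(k)$ matching in two dimensions --- note that you already pass to the $O(k)$ coefficient, correctly, in the sound-hard case for exactly the analogous reason that the leading term there is uninformative.
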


\begin{proof}
Assume by contradiction that $D\neq\widehat{D}$. By analytic continuation, we first see that $u^{\infty}(\hat{x},k,d)=\widehat{u}^{\infty}(\hat{x},k,d)$ for
$(\hat x, k)\in \mathbb{S}^{n-1}\times\mathbb{R}_+$.
By Rellich's lemma (cf. \cite{CK}), from the assumption $u^{\infty}(\hat{x},k)=\widehat{u}^{\infty}(\hat{x},k)$ for all $\hat{x}\in \mathbb{S}^{n-1}$ it can be concluded that the
total waves fields $u(x,k)=\widehat{u}(x,k)$ for all $x\in \mathbb{G}$.
In particular,  we have
\be\label{uequhatu}
u(x,k)=\widehat{u}(x,k)\quad \mbox{for}\ x\in \pa B_R, \, k\in (0,\infty).
\en

{\bf Sound-soft Case.}
Consider first the case of sound-soft obstacles $D$ and $\widehat{D}$.
For the two dimensional case, we define
\ben
\wi{u}_2:=(\wi{\mathcal{K}}+L+\wi{\mathcal{S}}\circ W)\circ A(x\cdot d)\quad {\rm in}\,\,\R^2\ba\pa D.
\enn
Then, it is readily verified that $\wi{u}_2$ uniquely solves the following exterior Dirichlet boundary value problem
\ben
&&\Delta \wi{u}_2=0\quad {\rm in}\ \ \R^2\ba\ov{D},\\
&&\wi{u}_2=x\cdot d\quad {\rm on}\ \ \pa D,\\
&&\wi{u}_2(x)=O(1)\quad {\rm uniformly\, as}\ \ |x|\rightarrow\infty.
\enn
Similarly, one can define $\wi{\widehat{u}}_2$ associated to $\widehat{D}$. From \eqref{uasyD2D} and \eqref{uequhatu}, by comparing the coefficient of the term $k$, we found that
$\wi{u}_2=\wi{\widehat{u}}_2$ on $\pa B_R$. Note that both $\wi{u}_2$ and $\wi{\widehat{u}}_2$ are harmonic functions in $\R^2\ba\ov{B_R}$ and bounded at infinity. Thus by the
uniqueness of the exterior Dirichlet problem for Laplace's equation \cite{Kress}, we conclude that $\wi{u}_2=\wi{\widehat{u}}_2$ in $\R^2\ba\ov{B_R}$.
This further implies that $\wi{u}_2 = \wi{\widehat{u}}_2$ in $\mathbb{G}$ by the analytic extension.
Denote by $\wi{v}_2:=\wi{u}_2-x\cdot d$ and $\wi{\widehat{v}}_2:=\wi{\widehat{u}}_2-x\cdot d$ in $\R^2\ba\ov{D}$ and $\R^2\ba\ov{\widehat{D}}$, respectively.
Then $\wi{v}_2$ is also harmonic in $\R^2\ba\ov{D}$ and vanishing on $\pa D$.
Similarly, $\wi{\widehat{v}}_2$ is harmonic in $\R^2\ba\ov{\widehat{D}}$ and vanishing on $\pa \widehat{D}$.
Moreover, $\wi{v}_2 = \wi{\widehat{v}}_2$ in $\mathbb{G}$.
Since $D\neq\widehat{D}$, we have $\wi{v}_2$ is a harmonic function in $D^{\ast}$
with the homogeneous Dirichlet boundary $\wi{v}_2=0$ on $\pa D^{\ast}$. Using the maximum-minimum principle in $D^{\ast}$ and further the analytic extension in $\R^2\ba\ov{D}$, we conclude that
$\wi{v}_2=0$ in $\R^2\ba\ov{D}$. This readily implies $\wi{u}_2=x\cdot d$ in $\R^2\ba\ov{D}$.
However, this leads to a contradiction since, for $|x|\rightarrow\infty$, $\wi{u}_2(x)=O(1)$ uniformly in $\hat x$.

For the scattering problem in three dimensions, we define
\ben
\wi{u}_3:=\wi{\mathcal{K}}\circ A(1)\quad {\rm in }\ \ \R^3\ba\ov{D}.
\enn
Then, it is readily verified that $\wi{u}_3$ uniquely solves the following exterior Dirichlet problem
\ben
&&\Delta \wi{u}_3=0\quad {\rm in}\ \ \R^3\ba\ov{D},\cr
&&\wi{u}_3=1\quad {\rm on}\ \ \pa D,\cr
&&\wi{u}_3(x)=o(1)\quad {\rm uniformly\, as}\ \ |x|\rightarrow\infty.
\enn
Similarly, one can define $\wi{\widehat{u}}_3$ associated to $\widehat{D}$.
From \eqref{uequhatu} and \eqref{uasyD3D}, by comparing the coefficient of the term $k^{0}$, we found that $\wi{u}_3=\wi{\widehat{u}}_3$ on $\pa B_R$.
By uniqueness of the exterior Dirichlet problem for Laplace's equation \cite{Kress}, we conclude that $\wi{u}_3=\wi{\widehat{u}}_3$ in $\R^3\ba\ov{B_R}$.
This further implies that $\wi{u}_3 = \wi{\widehat{u}}_3$ in $\mathbb{G}$ by the analytic extension.
Since $D\neq\widehat{D}$, we deduce that $\wi{u}_3$ is a harmonic function in $D^{\ast}$
with Dirichlet boundary $\wi{u}_3=1$ on $\pa D^{\ast}$. Here, $D^*$ is the subdomain introduced earlier when discussing the admissible sound-hard obstacles. Using the maximum-minimum principle in $D^{\ast}$ and further the analytic extension in $\R^3\ba\ov{D}$, we conclude that
$\wi{u}_3=1$ in $\R^3\ba\ov{D}$. This leads to contradiction since, for $|x|\rightarrow\infty$, $\wi{u}_3(x)=o(1)$ uniformly in $\hat x$.

{\bf Sound-hard Case.}
We now turn to the case of sound-hard obstacles $D$ and $\widehat{D}$. Introduce the function
\ben
\wi{u}_N:=\wi{\mathcal{S}}\circ B(d\cdot\nu)\quad {\rm in}\,\R^n\ba\ov{D}.
\enn
Then, it is verified that $\wi{u}_N$ uniquely solves the following exterior Neumann problem
\ben
&&\Delta\wi{u}_N=0\;\;{\rm in}\, \R^n\ba\ov{D},\\
&&\frac{\pa\wi{u}_N}{\pa\nu}=d\cdot\nu\ \ {\rm on}\ \pa D,\\
&&\wi{u}_N(x)=o(1)\ \ {\rm uniformly\,as}\ |x|\rightarrow\infty.
\enn
Similarly, we introduce the function $\wi{\widehat{u}}_N$ associated to $\widehat{D}$.
From \eqref{uequhatu} and \eqref{uasyN2D}-\eqref{uasyN3D}, we deduce that
\ben
\wi{u}_N = \wi{\widehat{u}}_N \quad {\rm on}\,\pa B_R.
\enn
From \eqref{uequhatu}, \eqref{uasyN2D} in $\R^2$ and \eqref{uasyN3D} in $\R^3$, by comparing the coefficient of the term $k$,
we conclude that $\wi{u}_N = \wi{\widehat{u}}_N$ in $\mathbb{G}$.
Since $D\neq\widehat{D}$, we deduce that $\mathbbm{w}(x):=\wi{u}_N(x)-d\cdot x$ is a harmonic function in $D^{\ast}$ with the homogeneous Neumann boundary
$\frac{\pa \mathbbm{w}}{\pa\nu}=0$ on $\pa D^{\ast}$.
Since both $D$ and $\widehat{D}$ are admissible, we may apply the divergence theorem in $D^*$ to have
\ben
\int_{D^{\ast}}|\nabla \mathbbm{w}|^2dx = \int_{\pa D^{\ast}}\frac{\pa \mathbbm{w}}{\pa\nu}\ov{\mathbbm{w}}ds=0,
\enn
which further implies that $\mathbbm{w}=c$ in $D^{\ast}$ for some constant $c\in\C$. Again by the analytic continuation, we conclude that
$\mathbbm{w}=c$ in $\R^n\ba\ov{D}$; that is, $\wi{u}_N(x)=c+d\cdot x, x\in\R^n\ba\ov{D}$.
However, this is a contradiction, since for $|x|\rightarrow\infty$, one has that $\wi{u}_N(x)=o(1)$ uniformly in $\hat x$.

The proof is complete.
\end{proof}

\subsection{Recovery of the surrounding medium}

By Theorem~\ref{thm:main1}, we see that the embedded obstacle $D$ can be uniquely recovered, disregarding the surrounding medium $(\Omega\backslash\overline{D}, V)$. Now, we turn to the unique recovery of the medium parameter $V$.

\begin{theorem}\label{uni_q}
Let $(\Omega\backslash\overline{D}, V)$ and $({\Omega}\backslash\overline{D}, \widehat{V})$ be two mediums such that $u^{\infty}(\hat{x},k,d)=\widehat{u}^{\infty}(\hat{x},k, d)$
for $(\hat x, k)\in \Sigma\times \zeta$ and a fixed $d\in\mathbb{S}^{n-1}$, where $\Sigma\times\zeta$ is any open subset of $\mathbb{S}^{n-1}\times\mathbb{R}_+$.
Then $V=\widehat{V}$ under either one of the following admissibility conditions:
\begin{enumerate}
  \item[(i)] $D$ is sound soft and in two dimensional case $L\circ A(1)\neq 1$, both $V$ and $\widehat{V}$ are harmonic functions in $\Omega\ba\ov{D}$ satisfying the Dirichlet boundary conditions $V=\widehat{V}=0$ on $\pa D$;
  \item[(ii)] $D$ is sound hard and, both $V$ and $\widehat{V}$ are harmonic functions in $\Omega\ba\ov{D}$ satisfying the Neumann boundary conditions ${\pa V}/{\pa\nu}={\pa \widehat{V}}/{\pa\nu}=0$ on $\pa D$.
\end{enumerate}
\end{theorem}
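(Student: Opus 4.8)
The plan is to run, for the medium, the same low-wavenumber matching scheme used in the proof of Theorem~\ref{thm:main1}, but now extracting the coefficient that first carries the Newtonian volume potential. Since $D$ (and hence $\Om$ together with all of the $V$-independent operators $\mathcal F$, $A$, $\wi{\mathcal S}$, $B$, $\wi K'$) is common to both configurations, subtracting the asymptotic expansions of the Lemma and invoking \eqref{uequhatu} will eliminate every obstacle-only term. First I would fix the power of $k$ at which $\wi{\mathcal G}_V$ appears — the $k^2$ term in \eqref{uasyD3D}, \eqref{uasyN3D}, \eqref{uasyN2D}, and the $k^2$ term of \eqref{uasyD2D} (the $k^2\ln k$ coefficient being used beforehand to dispose of the scalar functional $U_V$). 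Matching this coefficient as a function on $\pa B_R$ yields an identity $\Xi_V=\Xi_{\widehat V}$ on $\pa B_R$, where $\Xi_V$ denotes the $V$-dependent field singled out by the expansion.

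The key structural observation is that $\Xi_V$ is exactly the exterior field generated by the volume source together with the obstacle's response: writing $w_0$ for the leading total field ($w_0=\mathcal F(1)$ in the sound-soft case and $w_0\equiv 1$ in the sound-hard case), one has $\Xi_V=\wi{\mathcal G}_V(w_0)$ corrected by a combined-/single-layer potential so that $-\Delta\Xi_V=V\,w_0$ in $\Om\ba\ov D$, $\Xi_V$ is harmonic in $\R^n\ba\ov\Om$, and $\Xi_V$ decays at infinity. I would verify, using the jump relations together with $A=(I/2+\wi K-\mathrm i\wi S)^{-1}$ and $B=(I/2-\wi K')^{-1}$, that $\Xi_V$ inherits the homogeneous boundary condition of the obstacle on $\pa D$: $\Xi_V=0$ on $\pa D$ in the sound-soft case, and $\pa_\nu\Xi_V=0$ on $\pa D$ in the sound-hard case (the latter because $(-I/2+\wi K')B=-I$). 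Since $\Xi_V-\Xi_{\widehat V}$ is harmonic and decaying in $\R^n\ba\ov{B_R}$ and vanishes on $\pa B_R$, exterior uniqueness for Laplace's equation \cite{Kress} together with unique continuation gives $\Xi_V=\Xi_{\widehat V}$ throughout $\R^n\ba\ov\Om$; in particular $w:=\Xi_V-\Xi_{\widehat V}$ has vanishing Cauchy data on $\pa\Om$.

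It then remains to convert this into $V=\widehat V$, and here the two admissibility hypotheses enter. Put $g:=V-\widehat V$, which by assumption is harmonic in $\Om\ba\ov D$, and recall $-\Delta w=g\,w_0$ there, with $w$ satisfying the homogeneous obstacle condition on $\pa D$ and zero Cauchy data on $\pa\Om$. Applying Green's second identity to the pair $(\ov g,w)$ over $\Om\ba\ov D$ and using $\Delta\ov g=0$, the boundary integral over $\pa\Om$ drops out by the zero Cauchy data, while the boundary integral over $\pa D$ drops out because on $\pa D$ one factor is killed by the homogeneous condition on $w$ and the other by the stated hypothesis ($g=0$ in case (i), $\pa_\nu g=0$ in case (ii)). This leaves $\int_{\Om\ba\ov D}|g|^2 w_0\,dx=0$. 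Since $w_0\equiv 1$ in the sound-hard case, and $w_0=\mathcal F(1)$ is strictly of one sign in $\Om\ba\ov D$ in the sound-soft case — automatically in $\R^3$, and in $\R^2$ precisely because $L\circ A(1)\neq 1$ forces $\mathcal F(1)$ to tend to the nonzero constant $1-L\circ A(1)$ at infinity and hence, by the maximum principle, to be nonvanishing — we conclude $g\equiv 0$, i.e.\ $V=\widehat V$.

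The main obstacle I anticipate is the second step: correctly identifying $\Xi_V$ as a genuine exterior boundary value problem and checking that it carries the obstacle's boundary condition on $\pa D$, since this is exactly what makes the two hypotheses (harmonicity of $V-\widehat V$ and its Dirichlet/Neumann trace on $\pa D$) align with the two boundary factors in Green's identity. The degenerate two-dimensional situation, where the weight $\mathcal F(1)$ can collapse to zero, is the reason the extra requirement $L\circ A(1)\neq 1$ must be imposed, and isolating the clean $k^2$ coefficient from the $\ln k$-laden expansion \eqref{uasyD2D} is the most delicate bookkeeping step.
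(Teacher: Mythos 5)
Your proposal is correct and follows essentially the same route as the paper: the paper likewise extracts the $k^{2}$ coefficient to obtain a $V$-dependent field (its $\wi{u}_D$, resp.\ $\wi{u}_N$, playing the role of your $\Xi_V$) solving $\Delta\wi{u}=-V\mathcal{F}(1)$ (resp.\ $-V$) with the obstacle's homogeneous boundary condition on $\pa D$ and vanishing Cauchy data of the difference on $\pa\Omega$, then applies Green's second identity against the harmonic difference $\widehat V-V$ and concludes via the sign-preservation of $\mathcal{F}(1)$ (using $L\circ A(1)\neq 1$ in 2D) exactly as you describe. No substantive difference.
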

\begin{proof}
By a same argument as that for the proof of Theorem~\ref{thm:main1}, one can show that the total fields coincide in $\R^3\ba B_R$, and furthermore their Cauchy date coincide on the boundary $\pa B_R$, i.e.,
\be\label{uequhatu2}
u(x,k)=\widehat{u}(x,k)\ {\rm and}\ \frac{\pa u(x,k)}{\pa\nu(x)}=\frac{\pa\widehat{u}(x,k)}{\pa\nu(x)},\ x\in \pa \Omega, \, k\in (0,\infty).
\en

{\bf Sound-soft $D$}. Let us first consider the case that $D$ is a sound-soft obstacle.
We introduce the function $\wi{u}_D$ in $\Omega\ba\ov{D}$ as follows,
\ben
\wi{u}_D:=\left\{
            \begin{array}{ll}
              \mathcal {F}\circ \wi{G}_V\circ\mathcal {F}(1), & \hbox{for 3D case;}\medskip \\
              \mathcal {F}\circ\wi{G_V}\circ\mathcal {F}(1)+\mathcal {F}\circ(1+c_2)U_V\circ\mathcal {F}(1)\\
              +\mathcal {F}\circ U_V\circ\mathcal {F}\circ (\wi{S}+c_2L)\circ A(1), & \hbox{for 2D case.}
            \end{array}
          \right.
\enn
Then one can verify that $\wi{u}_D$ is a solution to the following Dirichlet boundary value problem
\ben
\Delta \wi{u}_D=-V\mathcal {F}(1)\quad {\rm in}\ \ \Omega\ba\ov{D},\qquad \wi{u}_D=0\quad {\rm on}\ \ \pa D.
\enn
Similarly, we also introduce the corresponding function $\wi{\widehat{u}}_D$ associated to $\widehat{V}$.
From \eqref{uequhatu2} and \eqref{uasyD2D}-\eqref{uasyD3D}, by comparing the term of order $k^2$, one immediately has
\be\label{wequwhatD}
\wi{u}_D=\wi{u}_D\quad{\rm and}\quad\frac{\pa \wi{u}_D}{\pa\nu}=\frac{\pa \wi{u}_D}{\pa\nu} \quad {\rm on}\ \ \pa \Omega.
\en
Letting $\xi_D:=\wi{u}_D-\wi{\widehat{u}}_D$, we have
\be\label{vD}
\Delta \xi_D=(\widehat{V}-V)\mathcal {F}(1)\quad {\rm in}\ \ \Omega\ba\ov{D},
\en
and
\begin{equation}\label{VD2}
\xi_D=0\quad {\rm on}\ \ \pa D \quad{\rm and}\quad \xi_D=\frac{\pa \xi_D}{\pa\nu}=0\quad {\rm on}\ \ \pa \Omega.
\end{equation}
By the assumption, the difference $V_D:=\widehat{V}-V$ is a solution of the following boundary value problem
\be\label{pD}
\Delta V_D=0 \quad {\rm in}\ \ \Omega\ba\ov{D}\quad{\rm and}\quad  V_D=0 \quad {\rm on}\ \ \pa D.
\en
Using Green's theorem in $\Omega\ba\ov{D}$, by \eqref{vD}, \eqref{VD2} and \eqref{pD}, we deduce that
\ben
&&\int_{\Omega\ba\ov{D}}\mathcal {F}(1)|V_D|^2\ dx\cr
&=&\int_{\Omega\ba\ov{D}}(\Delta \xi_D \ov{V_D}-\xi_D\Delta\ov{V_D})\ dx\cr
&=&\int_{\pa \Omega}\left(\frac{\pa \xi_D}{\pa\nu} \ov{V_D}-\xi_D\frac{\pa \ov{V_D}}{\pa\nu}\right)\ ds-\int_{\pa D}\left(\frac{\pa \xi_D}{\pa\nu} \ov{V_D}-\xi_D\frac{\pa \ov{V_D}}{\pa\nu}\right)\ ds\cr
&=&0.
\enn
This further implies that $V_D=0$ in $\Omega\ba\ov{D}$ if the function $\mathbbm{f}:=\mathcal {F}(1)$ is sign preserving in $\Omega\ba\ov{D}$. That is, in such a case, one has the unique recovery result $V=\widehat{V}$ in $\Omega\ba\ov{D}$.
Next, we show that the function $\mathbbm{f}$ is sign preserving in $\Omega\ba\ov{D}$.
In the two dimensional case, $\mathbbm{f}$ is harmonic in $\R^2\ba\ov{D}$, vanishing on $\pa D$ and $\mathbbm{f}(x)\rightarrow 1-L\circ A(1)$ as $|x|\rightarrow\infty$.
Since $1-L\circ A(1)$ is a constant, by the maximum-minimum principle, the values of $\mathbbm{f}$ in $\R^2\ba\ov{D}$ are between $0$ and $1-L\circ A(1)$.
Thus, $\mathbbm{f}$ is sign preserving in $\R^2\ba\ov{D}$.
In three dimensions, $\mathbbm{f}$ is harmonic in $\R^3\ba\ov{D}$, vanishing on $\pa D$ and $\mathbbm{f}(x)\rightarrow 1$ as $|x|\rightarrow\infty$. Using again the
maximum-minimum principle, we deduce that $\mathbbm{f}$ is always positive in $\R^3\ba\ov{D}$. This completes the proof of the unique recovery of the medium $(\Omega\backslash\overline{D}, V)$ in the case that $D$ is a sound-soft obstacle.

{\bf Sound-hard $D$}. Consider now the case that $D$ is a sound-hard obstable.
We introduce the function $\wi{u}_N$ in $\Omega\ba\ov{D}$ as follows,
\ben
\wi{u}_N:=\left\{
            \begin{array}{ll}
              \wi{\mathcal{G}}_{V}(1)+\wi{\mathcal{S}}\circ B\circ\frac{\pa \wi{G}_V(1)}{\pa\nu}, & \hbox{for 3D case;}\medskip \\
              \wi{\mathcal{G}}_{V}(1)+\wi{\mathcal{S}}\circ B\circ\frac{\pa \wi{G}_V(1)}{\pa\nu}+c_2U_V(1), & \hbox{for 2D case.}
            \end{array}
          \right.
\enn
Then $\wi{u}_N$ is a solution of the following Neumann boundary value problem
\ben
\Delta \wi{u}_N=-V\quad {\rm in}\ \ \Omega\ba\ov{D},\qquad \frac{\pa \wi{u}_N}{\pa\nu}=0\quad {\rm on}\ \ \pa D.
\enn
Similarly, we introduce the corresponding function $\wi{\widehat{u}}_N$ associated to $\widehat{V}$.
From \eqref{uasyN2D}-\eqref{uasyN3D} and \eqref{uequhatu2}, by comparing the term of order $k^2$, one immediately has
\be\label{wequwhatN}
\wi{u}_N=\wi{u}_N\quad{\rm and}\quad\frac{\pa \wi{u}_N}{\pa\nu}=\frac{\pa \wi{u}_N}{\pa\nu} \quad {\rm on}\ \ \pa \Omega.
\en
Letting $\zeta_N:=\wi{u}_N-\wi{\widehat{u}}_N$, we have
\be\label{vN}
\Delta \zeta_N=\widehat{V}-V\quad {\rm in}\ \ \Omega\ba\ov{D},
\en
and
\begin{equation}\label{vN2}
\frac{\pa \zeta_N}{\pa\nu}=0\quad {\rm on}\ \ \pa D\quad{\rm and}\quad \zeta_N=\frac{\pa \zeta_N}{\pa\nu}=0\quad {\rm on}\ \ \partial \Omega.
\end{equation}
By the assumption, the difference $V_N:=\widehat{V}-V$ is a solution of the following boundary value problem
\be\label{pN}
\Delta V_N=0 \quad {\rm in}\ \ \Omega\ba\ov{D}\quad{\rm and}\quad  \frac{\pa V_N}{\pa\nu}=0 \quad {\rm on}\ \ \pa D.
\en
Using Green's theorem in $\Omega\ba\ov{D}$, by \eqref{vN}, \eqref{vN2} and \eqref{pN}, we deduce that
\ben
&&\int_{\Omega\ba\ov{D}}|V_N|^2\ dx\cr
&=&\int_{\Omega\ba\ov{D}}(\Delta \zeta_N \ov{V_N}-\zeta_N\Delta\ov{V_N})\ dx\cr
&=&\int_{\pa \Omega}\left(\frac{\pa \zeta_N}{\pa\nu} \ov{V_N}-\zeta_N\frac{\pa \ov{V_N}}{\pa\nu}\right)\ ds-\int_{\pa D}\left(\frac{\pa \zeta_N}{\pa\nu} \ov{V_N}-\zeta_N\frac{\pa \ov{V_N}}{\pa\nu}\right)\ ds\cr
&=&0.
\enn
This readily implies that $V_N=0$ in $\Omega\ba\ov{D}$; that is, $V=\widehat{V}$ in $\Omega\ba\ov{D}$.

The proof is complete.
\end{proof}

In Theorem~\ref{uni_q}, there are two admissibility conditions on the inhomogeneous medium $(\Omega\backslash\overline{D},V)$ under which it can be uniquely identified.
As an illustrative example, let us consider the case that $D$ is a polyhedron in $\mathbb{R}^n$ such that $\partial D$ is consisting of finitely
many cells $C_j$, $j=1,2,\ldots,\alpha$. Suppose that each cell has the following parametric representation
\[
\langle x,  \nu_j \rangle=l_j, \quad x\in C_j, \ 1\leq j\leq \alpha,
\]
where $\nu_j$ is the unit normal vector to $C_j$ and $l_j$ is the distance between the cell $C_j$ and the origin .
If $D$ is sound-soft, we let $(\Omega\backslash\overline{D}, V)$ be such that $V$ is a piecewise polynomial function associated to a certain polyhedral triangulation which as
a whole is an $H^2$-function. Moreover, it is assumed that for the piece touching the cell $C_j$, the parametric form of the polynomial is given by $x\cdot \theta_j+\tau_j$,
where $\theta_j\in\mathbb{R}^n$, $\tau_j\in\mathbb{R}$ and $\theta_j/\|\theta_j\|=\nu_j$, $\tau_j=-\|\theta_j\|\cdot l_j$.
By properly choosing the polynomials in the rest of the pieces, such a medium parameter function $V$ is harmonic (in the weak $H^2$ sense) in
$\Omega\backslash \overline{D}$ and satisfies the homogeneous Dirichlet boundary condition on $\partial D$.
Hence, by Theorem~\ref{uni_q}, both $D$ and $(\Omega\backslash\overline{D}, V)$ can be uniquely recovered.
Next, if $D$ is sound-hard, we let $V$ be a piecewise function associated to a certain polyhedral triangulation such that at each piece, it is a polynomial function,
and as a whole it is an $H^2$-function. In the piece touching the cell $C_j$, we assume that $V$ is of the form, $x\cdot \theta_j+\tau_j$,
where $\theta_j\in\mathbb{R}^n$, $\tau_j\in\mathbb{R}$ and $\theta_j\cdot \nu_j=0$.
By properly choosing the polynomials in the rest of the polyhedral pieces, we can also make such a medium parameter function $V$ harmonic (in the weak $H^2$ sense)
in $\Omega\backslash \overline{D}$ and satisfy the homogeneous Neumann boundary condition on $\partial D$.
Hence, by Theorem~\ref{uni_q}, both $D$ and $(\Omega\backslash\overline{D}, V)$ can be uniquely recovered.
Those remarks would find important applications if one intends to design a numerical recovery scheme of general $D$ and $(\Omega\backslash\overline{D}, V)$ by the
so-called {\it finite element method}, where one can approximate $D$ by a polyhedron and $V$ by a piecewise polynomial function.

\section{Numerics and discussions}

From the theoretical analyses given in the previous sections, by letting $k\rightarrow +0$, the buried obstacles produce more contribution to the scattered field, and thus also to the
far-field measurements. In this sense, the contribution from the surrounding inhomogeneous medium can be regarded as noise to the far-field measurements.
It is natural to reconstruct the buried obstacle by using the far-field measurements corresponding to low frequencies, while determining the surrounding medium by the far-field measurements corresponding to regular frequencies.

In our numerical simulations, we used the boundary integral equation method to compute the far-field patterns $u^\infty(\hat{x}_l, k_m, d_n)$  with $\hat{x}_l = 2\pi l/L,\,l=1,2,\cdots,L$,
$0<k_1<k_2<\cdots<k_M$, $d_n = 2\pi n/N$ for $L$ equidistantly distributed observation directions and $N$ equidistantly distributed observation directions.
We further perturb
$u^\infty(\hat{x}_l, k_m, d_n)$ by random noise using
\ben
u_{\delta}^\infty(\hat{x}_l, k_m, d_n) = (1+\delta\frac{s_1+is_2}{\sqrt{s_1^2+s_2^2}})u^\infty(\hat{x}_l, k_m, d_n),
\enn
where $s_1$ and $_2$ are two random values in $(-1,1)$ and $\delta$ presents the relative error.

In \cite{Potthast2010}, Potthast proposed the Orthogonal Sampling method based on the following indicator
\ben
I_{Potthast1}(z,d_n)=\sum_{m=1}^{M}\Big|\sum_{l=1}^{L} u^{\infty}(\hat{x}_l,k_m,d_n)e^{ik\hat{x}_i\cdot z}\Big|^2, \quad z\in\R^n,
\enn
for some fixed incident direction $d_n$. Motivated by the study in \cite{Liu2016}, we also consider the following indicator
\ben
I_{Liu1}(z,d_n)=\Big|\sum_{m=1}^{M}\sum_{l=1}^{L}e^{-ikd_n\cdot z}u^{\infty}(\hat{x}_l,k_m,d_n)e^{ik\hat{x}_i\cdot z}\Big|^2, \quad z\in\R^n.
\enn
The numerical simulations in \cite{Potthast2010} have shown that the indicator $I_{MF}(z,d_n)$ can be used to find the rough locations of the underlying obstacles, but the resolution to
the shape reconstruction is not so good. To solve this problem, Potthast suggested in \cite{Potthast2010} to use the following indicator
\ben
I_{PotthastN}(z)=\sum_{n=1}^{N}\sum_{m=1}^{M}\Big|\sum_{l=1}^{L}u^{\infty}(\hat{x}_l,k_m,d_n)e^{ik\hat{x}_i\cdot z}\Big|^2, \quad z\in\R^n,
\enn
with $N$ incident directions $d_n, \, n=1,2,\cdots,N$. Similarly, we also consider the following indicator
\ben
I_{LiuN}(z)=\Big|\sum_{n=1}^{N}\sum_{m=1}^{M}\sum_{l=1}^{L}e^{-ikd_n\cdot z}u^{\infty}(\hat{x}_l,k_m,d_n)e^{ik\hat{x}_i\cdot z}\Big|^2, \quad z\in\R^n.
\enn

In the following, we consider a benchmark example: the support of the surrounding inhomogeneous medium $\pa\Om$ is given by a round square, parameterized by
$x(t)= 2.25(\cos^3 t+\cos t,\,\sin^3 t+\sin t),\,0\leq t\leq2\pi$, whereas
the buried obstacle $D$ is given by a sound-soft "kite", parameterized by $x(t) =(\cos t+0.65\cos 2t-0.65, 1.5\sin t),\,0\leq t\leq2\pi$. Figure \ref{true1} shows the original
domain. The research domain is $[-6,6]\times[-6,6]$ with $121\times 121$ equally spaced sampling points.
We set the number of the observation directions $L=64$, the contrast function $q=0.5$.
The results by using a single incident direction are shown in Figure \ref{reconstruction1}. We observe that the indicators $I_{Potthast1}$ and $I_{Liu1}$ capture
the location of the buried "kite" by using the data corresponding to $10$ equally distributed frequencies in $[0.1, 2]$. To reconstruct the support of the surrounding inhomogeneous medium, we use $50$ equally distributed frequencies in $[0.1, 10]$.
The shape information can be improved by using more incident directions. Figure \ref{reconstruction2} shows the reconstructions with four
incident directions. In particular, $I_{LiuN}$ gives a rough shape reconstruction for buried "kite".
From Figures \ref{reconstruction1} and \ref{reconstruction2}, we found that our indicators $I_{Liu1}$ and $I_{LiuN}$ seemingly produce better reconstructions. We shall study the numerical
method in a forthcoming paper.
\begin{figure}[htbp]
  \centering
\includegraphics[width=3in]{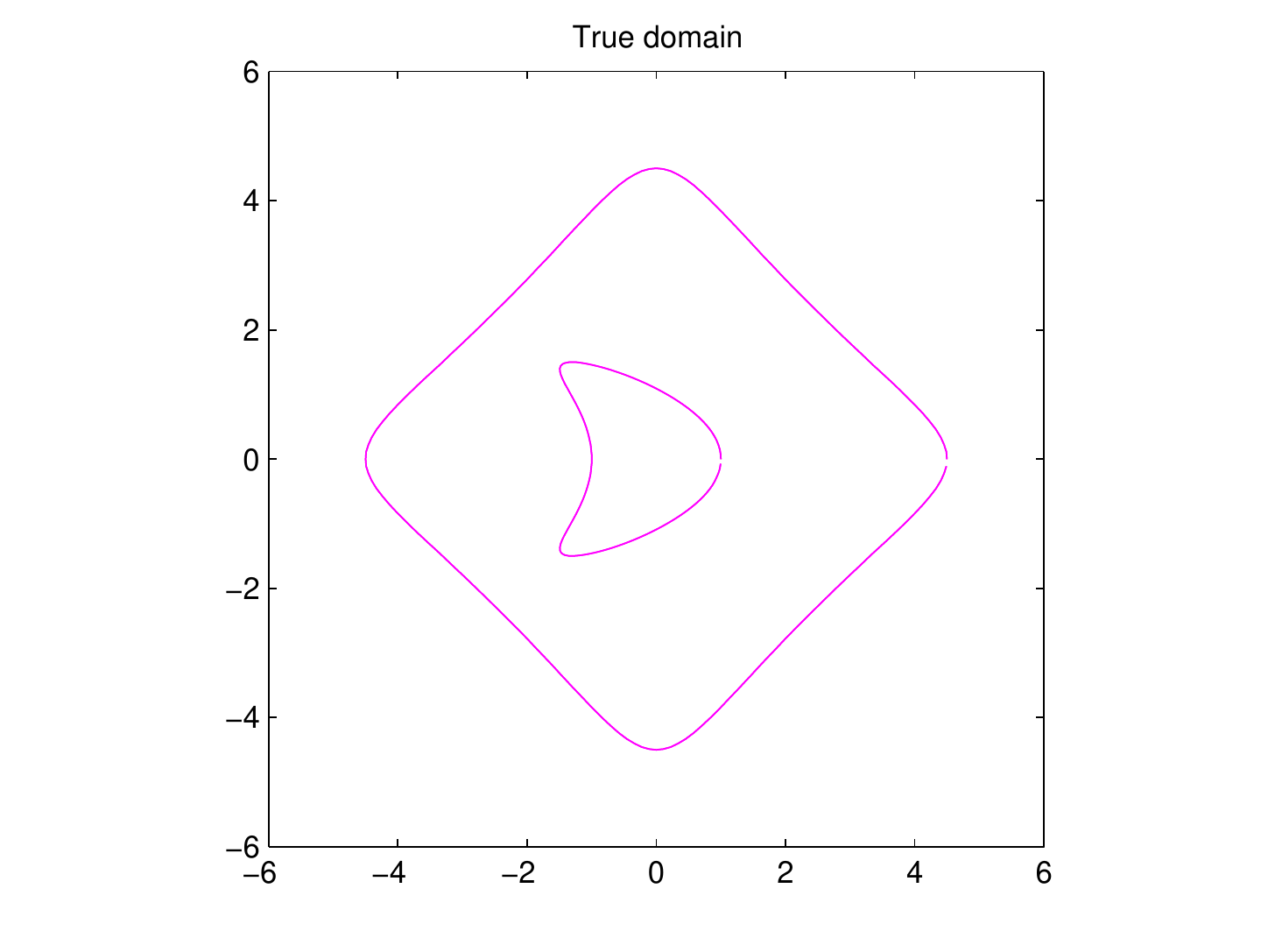}
\caption{True structure}\label{true1}
\end{figure}

\begin{figure}[htbp]
  \centering
\includegraphics[width=6in]{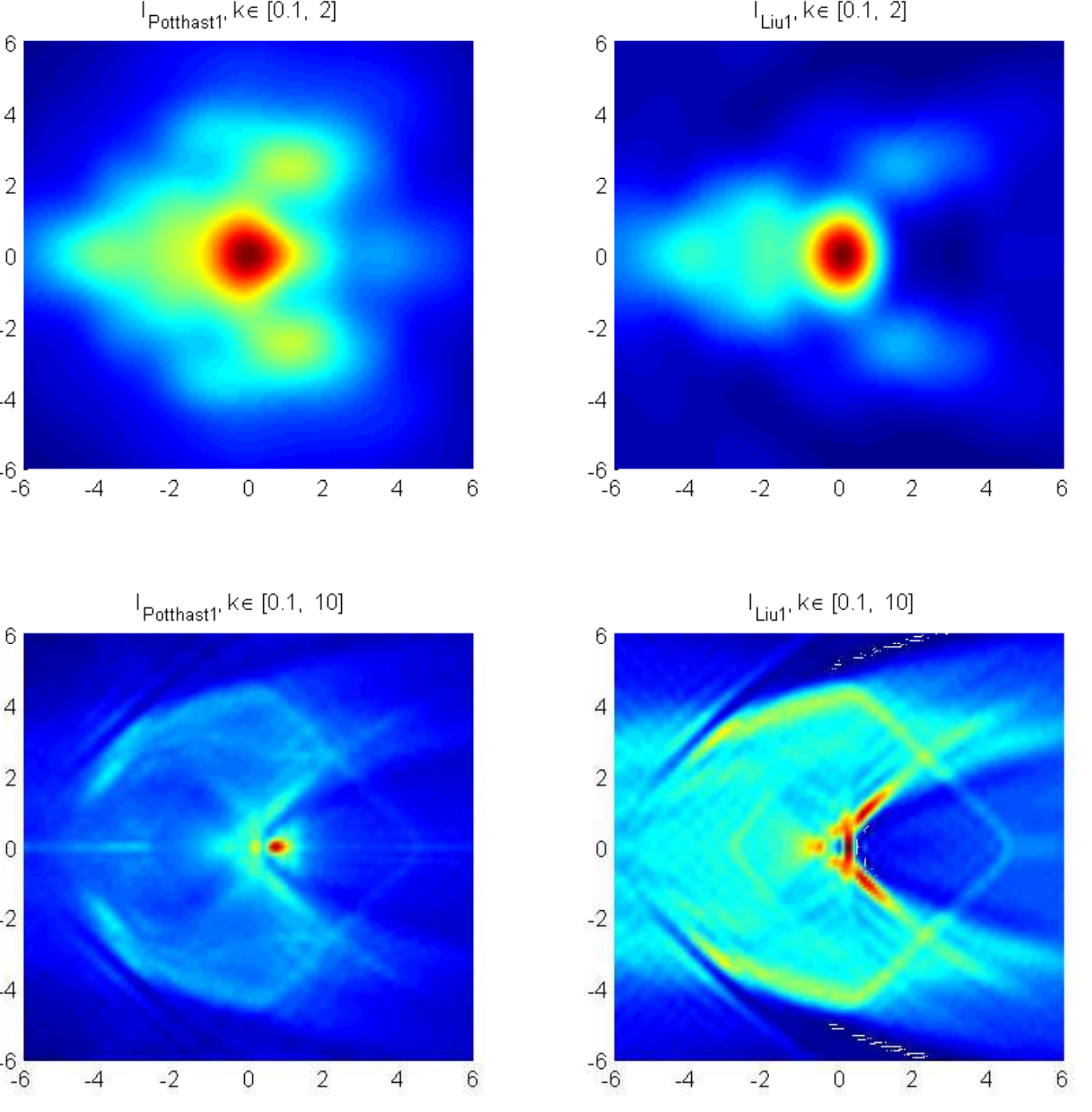}
\caption{Reconstruction with a single incident direction $(-1,0)$ and $10\%$ noise.}\label{reconstruction1}
\end{figure}

\begin{figure}[htbp]
  \centering
\includegraphics[width=6in]{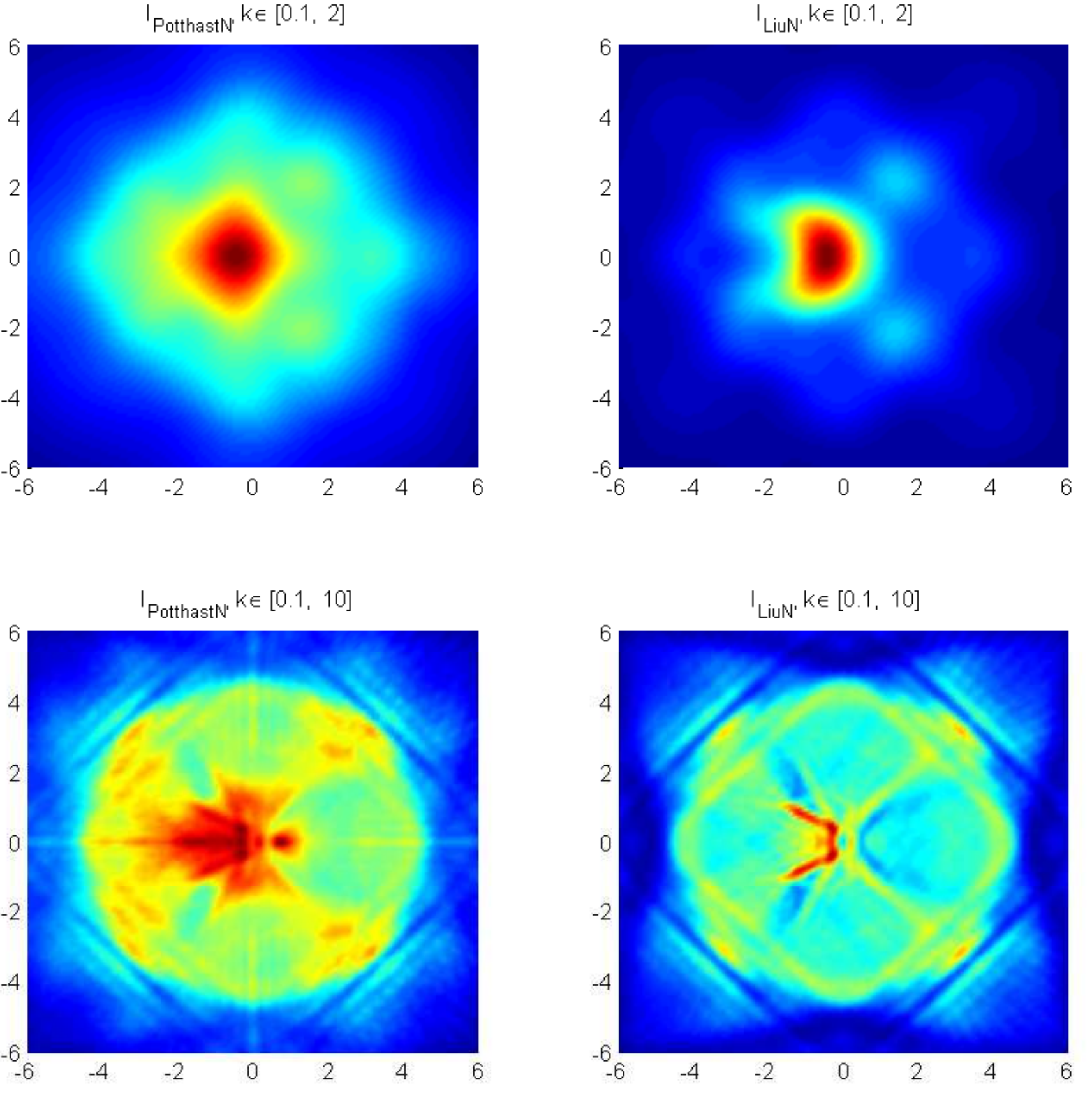}
\caption{Reconstruction with four incident directions $(1,0),(0,1),(0,-1),(-1,0)$ and $10\%$ noise.}\label{reconstruction2}
\end{figure}


\section*{Acknowledgement}

The work of H. Liu was supported by the FRG fund from Hong Kong Baptist University, the Hong Kong RGC grants (projects 12302415 and 405513) and NNSF of China (No.\,11371115).
The work of X. Liu was supported by the NNSF of China under grants 11571355, 61379093 and 91430102.

\end{document}